\def\thesection{\arabic{section}}
\def\theequation{\thesection.\arabic{equation}}
\newcommand{\e}{\epsilon}
\newcommand{\Om} {\Omega}
\newcommand{\la} {\lambda}
\newcommand{\La} {\Lambda}
\newcommand{\noi} {\noindent}
\newcommand{\uline} {\underline}
\newcommand{\oline} {\overline}
\newcommand{\mb} {\mathbb}
\markboth{\small } {\small Singular Problem}
\def\theequation{\@arabic{\c@section}.\@arabic{\c@equation}}
\newcommand{\QED}{\rule{2mm}{2mm}}
\newtheorem{Theorem}{Theorem}[section]
\newtheorem{Lemma}[Theorem]{Lemma}
\newtheorem{Proposition}[Theorem]{Proposition}
\newtheorem{Definition}[Theorem]{Definition}
\begin{document}

{\vspace{0.01in}}

\title
{ \sc On a class of weighted p-Laplace equation with singular nonlinearity}

\author{P. Garain\footnote{Department of Mathematics and Systems Analysis, Aalto University, Otakaari 1, 02150, Espoo, Finland.
 e-mail: pgarain92@gmail.com}~~ and
   ~~ Tuhina Mukherjee\footnote{T.I.F.R. Centre for Applicable Mathematics, Bengaluru-560065, India.
 e-mail: tulimukh@gmail.com} }

\date{}

\maketitle

\begin{abstract}
This article deals with the existence of the following quasilinear degenerate singular elliptic equation
\begin{equation*}
(P_\la)\left\{
\begin{split}
  -\text{div}(w(x)|\nabla u|^{p-2}\nabla u) &=  g_{\la}(u),\;u>0\; \text{in}\; \Om,\\
 u&=0 \; \text{on}\; \partial \Om,
\end{split}\right.
\end{equation*}
where $ \Om \subset \mb R^n$ is a smooth bounded domain, $n\geq 3$, $\la>0$, $p>1$ and $w$ is a Muckenhoupt weight. Using variational techniques, for $g_{\la}(u)= \la f(u)u^{-q}$ and certain assumptions on $f$, we show existence of a  solution to $(P_\la)$ for each $\la>0$. Moreover when $g_{\la}(u)= \la u^{-q}+ u^{r}$ we establish existence of atleast two solutions to $(P_\la)$ in a suitable range of the parameter $\la$. Here we assume $q\in (0,1)$ and $r \in (p-1,p^*_s-1)$.

\medskip

\noi \textbf{Key words: Weighted p-Laplacian, Singular Nonlinearity, Multiple Weak Solutions, Variational Method.}

\medskip

\noi \textit{2010 Mathematics Subject Classification:} 35R11, 35R09, 35A15.

\end{abstract}

\section{Introduction}
In this article, we are interested in the question of existence of weak solutions to the following singular weighted $p$-Laplace equation
\begin{equation*}
(P_\la)\left\{
\begin{split}
  -\Delta_{p,w} u &=  g_{\la}(u),\;u>0\; \text{in}\; \Om,\\
 u&=0 \; \text{on}\; \partial \Om,
\end{split}\right.
\end{equation*}
where $ \Om \subset \mb R^n$ is a smooth bounded domain, $n\geq 3$, $\la>0$ and $p>1$.
We consider the nonlinearity $g_{\la}$ of the following two types:\begin{enumerate}
\item[Case (I)] $g_{\la}(u) = \la f(u)u^{-q}$ where $q\in (0,1)$ and $f:[0,\infty)\to \mb R$ satisfies
\begin{enumerate}
\item[(f1)] $f(0)>0$ such that $f$ is non decreasing and satisfies the following hypothesis:
$$
\lim_{t\to\infty}\frac{f(t)}{t^{q+p-1}}=0\text{ and } \lim_{t\to 0}\frac{f(t)}{t^q}=\infty.
$$
\end{enumerate}

\item[Case (II)] $g_{\la}(u)= \la u^{-q}+ u^{r}$ where $q\in (0,1)$, $r\in(p-1,p^*_s-1)$. Here $p^*_s= \frac{np_s}{n-p_s}$ for   $1\leq p_s <n$ where $p_s=\frac{ps}{s+1}$ and $s\in [\frac{1}{p-1},\infty) \cap (\frac{n}{p},\infty)$.
\end{enumerate}

We observe that in both the cases, Case (I) and Case (II), $g_\la$ is singular in the sense that
$$
\lim_{t\to 0^{+}}\,g_{\la}(t)=+\infty.
$$
Here
$$
\Delta_{p,w}u :=\text{div}(w(x)|\nabla u|^{p-2}\nabla u)
$$
is the weighted $p$-Laplace operator for some weight function $w$. When $w\equiv 1$, $\Delta_{p,w}=\Delta_{p}u$ (which is the usual $p$-Laplace operator) which further reduces to the classical Laplace operator '$\Delta$' for $p=2$.

In this article, we discuss the existence of weak solutions to the problem $(P_\la)$ depending on the range of $\la$. The study of singular elliptic problems has been a topic of considerable attention throughout the last three decade and there is a colossal amount of work done in this direction. Now, we state some known results in this direction which are essential to understand the difficulties and the framework of our problem. The following quasilinear singular problem has been investigated in quite a large number of papers:
\begin{equation}\label{plap}
\left\{
\begin{split}
  -\Delta_{p} u &=  \la\frac{h(x,u)}{u^q}+\mu u^{r},\;u>0\; \text{in}\; \Om,\\
 u&=0 \; \text{on}\; \partial \Om.
\end{split}\right.
\end{equation}
When $p=2$, $\la>0$ and $\mu=0$ (that is the purely singular case) Crandall, Rabinowitz and Tartar \cite{CRT} proved the existence of a unique classical solution $u_\la\in C^2(\Om)\cap C(\overline{\Om})$ of the problem \eqref{plap} for any $q>0$ and $h(x,u)=h(x)$ being nonnegative and bounded in $\Om$. For the same problem, existence of a weak solution in $H_0^1(\Om)$ was proved by Lazer-Mckenna \cite{LMckena} when $0<q<3$. Boccardo-Orsina \cite{BL} investigated the following purely singular problem in case of arbitrary $q>0$ with a weight function
\begin{equation}\label{2lap}
\left\{
\begin{split}
  -\text{div}(w(x)\nabla u) &=  \frac{h(x)}{u^q},\;u>0\; \text{in}\; \Om,\\
 u&=0 \; \text{on}\; \partial \Om,
\end{split}\right.
\end{equation}
where $w(x)$ satisfies 
\begin{equation}\label{wgtcondition}
w(x)\eta\cdot\eta\geq \alpha|\eta|^2,\,|w(x)|\leq\beta
\end{equation}
for some positive constants $\alpha,\beta$ and $\eta\in \mb R^n$. They proved existence of a weak solution $u\in H_{0}^1(\Om)$ for $0<q\leq 1$ and $u\in H^{1}_{loc}(\Om)$ for $q>1$ such that $u^\frac{q+1}{2}\in H_{0}^1(\Om)$. Recently, Canino-Sciunzi-Trombetta \cite{Canino1} generalized the problem \eqref{2lap} for the $p$-Laplace operator and obtained both the existence and uniqueness of weak solutions. This problem in the weighted case has been studied in \cite{PG}. When $h(x,u)=h(u)\geq 0$ and $\mu =0$, \eqref{plap} was investigated by Ko-Lee-Shivaji in \cite{KLS} for any $p>1$ and $0<q<1$ in a certain range of $\la$.

Concerning the perturbed case i.e. $\mu>0$, Haitao \cite{haitoh} proved the existence of at least two solutions for \eqref{plap}  using Perron's method and assuming $p=2,0<q<1<r\leq\frac{n+2}{n-2}$, $h \equiv 1$ and $\mu=1$ for certain range of $\la>0$. Simultaneously, using the Nehari manifold approach, Hirano-Saccon-Shioji \cite{hirano1} proved multiplicity result for the problem \eqref{plap} with the assumptions as in \cite{haitoh} in a certain range of $\la>0$. In a natural way of extension, the weighted Laplace equation with singular nonlinearity and a perturbation term that is 
\begin{equation}\label{3lap}
\left\{
\begin{split}
  -\text{div}(w(x)\nabla u) &= \frac{\la}{u^q}+u^r,\;u>0\; \text{in}\; \Om,\\
 u&=0 \; \text{on}\; \partial \Om,
\end{split}\right.
\end{equation}
where $w(x)$ satisfies the hypothesis \eqref{wgtcondition}, existence of a weak solution for  \eqref{3lap} has been proved in \cite{Boc3} for any $q>0$ and multiplicity result has been established in \cite{arcoya} under the restriction $0<q<1.$ Later on, authors in \cite{DArcoya} obtained multiplicity results for the problem \eqref{3lap} for any $q>0$ when $w\equiv 1$ and $\la$ lies in an appropriate range. Moving on to the quasilinear case, multiplicity result for the following singular $p$-Laplace equation
\begin{equation}\label{4lap}
\left\{
\begin{split}
  -\Delta_{p}u &= \frac{\la}{u^q}+u^r,\;u>0\; \text{in}\; \Om,\\
 u&=0 \; \text{on}\; \partial \Om,
\end{split}\right.
\end{equation}
has been established by Giacomoni-Schindler-Tak\'{a}\v{c} in \cite{Giacomoni} assuming $0<q<1$ and $p-1<r\leq p^*{-1}$ for certain $\la>0.$ The problem \eqref{4lap} with $q\geq 1$ has been recently settled in \cite{KBal}. We refer to \cite{book-radu} for a comprehensive list of bibliography related to semilinear singular Dirichlet problems.

 From the above literature it is clear that singular problems has been almost settled for the $p$-Laplace operator which is degenerate for $p>2$ and singular for $1<p<2$ at the critical points (see \cite{PLin}). Such degeneracy behavior of the operator also depends on the weight function '$w$' as in our case for the operator $\Delta_{p,w}$, even in the case $p=2$ (since $\Delta_{2,w}=w(x)\Delta u+\nabla w\cdot\nabla u$). Motivated by the singular problems with weighted p-Laplace operator studied in literature, it is natural to ask the question of existence when $w$ violates the hypothesis \eqref{wgtcondition}, specially if $w\to 0$ or $w\to\infty$ (e.g.,\,$w(x)=|x|^\alpha$) which captures the degenerate behavior of $\Delta_{p,w}$. For more details on such operators, we refer to \cite{Drabek, EFabes, Juh}. 
 
 The problem $(P_\la)$ for Case (I) when $w(x) \equiv 1$ has been studied by Ko, Lee and Shivaji in \cite{KLS} and  $(P_\la)$ for Case (II) has been studied by Arcoya and Bocardo in \cite{arcoya} when $p=2$ and $w(x)$ satisfying \eqref{wgtcondition}. Our main focus in this article is to provide a class of weights $w$ which assures the existence of weak solutions to the problem $(P_{\la})$ in both Case (I) and Case (II).
 We start with choosing the weight function in the class of Muckenhoupt weights $A_p$ (refer to section 2 for definition and see \cite{Muc} for more details). Then we define a subclass $A_s$ of $A_p$ which ensures some crucial embedding results (see section 2). When $w\in A_s$, Garain in \cite{PG} proved existence of solution to
\[-\Delta_{p,w}u = u^{-q},\; u>0\; \text{in}\; \Om,\; u=0\;\text{on}\; \partial \Om\]
such that $u(x)\geq c_K>0$ when $x \in K \subset \subset \Om$. We use this property of solutions to the purely singular problem with $-\Delta_{p,w}$ very efficiently to construct sub solution for $(P_\la)$. Then using Perron's idea, we show that $(P_\la)$ in Case (I) possesses a bounded weak solution. To prove the multiplicity result, later we consider a parameter dependent perturbed problem $(P_\la)$ in Case (II). Here, we consider an approximated problem $(P_{\la,\e})$ and showed existence of two weak solutions $\zeta_\e,\nu_\e$ to it using the Mountain pass Lemma. Next, we lead to passing the limit as $\e \to 0$ on $\{\zeta_\e\}$ and $\{\nu_\e\}$ which contributes two weak solutions to $(P_\la)$ in Case (II). The key point of this article is that we do not require any regularity results and proved our main theorems using purely variational techniques although the weight $w$ here can be possibly singular. The results proved here are completely new concerning the singular problem with weighted $p$-Laplace operator.\\

We have divided our paper into four sections: Section 2 contains the variational framework and preliminaries. Section 3 contains the main result related to $(P_\la)$ in Case (I) and Section 4 contains the multiplicity result for $(P_\la)$ in Case (II).

\section{Variational Framework}
We begin this section by briefly introducing the weighted Sobolev space corresponding to the Muckenhoupt weight, for more details refer to \cite{Drabek, EFabes, Juh, Tero, Muc}.

\begin{Definition}{(Muckenhoupt Weight)}
Let $w$ be a locally integrable function in $\mathbb{R}^n$ such that $0<w<\infty$ a.e. in $\mathbb{R}^n$. Then we say that $w$ belong to the Muckenhoupt class $A_p$, $1<p<\infty$ if there exists a positive constant $c_{p,w}$ (called the $A_p$ constant of $w$) depending only on $p$ and $w$ such that for all balls $B$ in $\mathbb{R}^n$,
$$
\left(\frac{1}{|B|}\int_{B}w \,dx\right)\left(\frac{1}{|B|}\int_{B}w^{-\frac{1}{p-1}} \,dx\right)^{p-1}\leq c_{p,w}.
$$
\end{Definition}

\textbf{Example} $w(x) = |x|^\alpha \in A_p$ if and only if $-n < \alpha < n(p-1)$ for any $1<p<\infty$, see \cite{Juh, Tero}.

\begin{Definition}(Weighted Sobolev Space)
For any $w \in A_p$, we define the weighted Sobolev space $W^{1,p}(\Omega,w)$ by
$$
W^{1,p}(\Omega,w)=\{u:\Omega\to\mathbb{R}\text{ measurable }:\|u\|_{1,p,w}<\infty\},
$$
with respect to the norm $\|.\|_{1,p,w}$ defined by
\begin{equation}\label{norm1}
\|u\|_{1,p,w} = \left(\int_{\Omega}|u(x)|^{p} w(x)\,dx\right)^\frac{1}{p} + \left(\int_{\Omega}|\nabla u|^{p} w(x)\,dx\right)^\frac{1}{p}.
\end{equation}
\end{Definition}
Also we define the space $W_{0}^{1,p}(\Omega,w)=\overline{(C_c^\infty(\Omega),\|\cdot\|_{1,p,w})}$ and denote it by $X$.
\begin{Lemma}(Poincar\'e inequality \cite{Juh})\label{Poincare inequality}
For any $w \in A_p$, we have
\begin{equation}
\int_{\Omega}|\phi|^{p} w(x) \,dx \leq C\,(\mbox{diam}\;\Omega)^p\int_{\Omega}|\nabla\phi|^{p} w(x) \,dx,\;\forall\;\phi\in C_{c}^\infty(\Omega),
\end{equation}
for some constant $C>0$ independent of $\phi$.
\end{Lemma}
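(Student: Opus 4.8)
The plan is to reduce the estimate to a pointwise bound of $\phi$ by a Riesz potential of $|\nabla\phi|$, and then to bound that potential in $L^p(w)$ at the price of a factor $\mathrm{diam}\,\Omega$, using that $w\in A_p$. Extend $\phi\in C_c^\infty(\Omega)$ by zero to $\mathbb R^n$. For each unit vector $\omega\in S^{n-1}$ the fundamental theorem of calculus gives $\phi(x)=-\int_0^\infty\nabla\phi(x+r\omega)\cdot\omega\,dr$; averaging over $\omega$ and changing to Cartesian coordinates $y=x+r\omega$ (so that $dr\,d\omega=|y-x|^{1-n}\,dy$) yields the classical representation
\[
|\phi(x)|\ \le\ \frac{1}{|S^{n-1}|}\int_{\Omega}\frac{|\nabla\phi(y)|}{|y-x|^{\,n-1}}\,dy ,
\]
where the integral runs over $\Omega$ because $\nabla\phi$ is supported there.

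Next, set $D:=\mathrm{diam}\,\Omega$ and note that $|x-y|\le D$ whenever $x,y\in\Omega$. Decomposing $\Omega$ into the dyadic shells $\{\,2^{-k-1}D\le|x-y|<2^{-k}D\,\}$, $k\ge0$, bounding the $k$-th contribution by $(2^{-k-1}D)^{1-n}$ times the integral of $|\nabla\phi|\chi_\Omega$ over the ball $B(x,2^{-k}D)$, and summing the resulting geometric series, one obtains the pointwise inequality
\[
|\phi(x)|\ \le\ C_n\,D\,\big(M(|\nabla\phi|\chi_\Omega)\big)(x)\qquad\text{for }x\in\Omega ,
\]
with $M$ the Hardy--Littlewood maximal operator and $C_n$ a purely dimensional constant. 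Raising to the power $p$, multiplying by $w$, integrating over $\Omega$, and then dominating the integral of the nonnegative majorant by its integral over all of $\mathbb R^n$, we get
\[
\int_\Omega|\phi|^p w\,dx\ \le\ C_n^{\,p}\,D^p\int_{\mathbb R^n}\big(M(|\nabla\phi|\chi_\Omega)\big)^p w\,dx .
\]
Since $w\in A_p$, the Muckenhoupt weighted maximal inequality $\int_{\mathbb R^n}(Mh)^p w\,dx\le C(p,w)\int_{\mathbb R^n}|h|^p w\,dx$ (valid precisely for $A_p$ weights; see \cite{Muc}) applied with $h=|\nabla\phi|\chi_\Omega$ gives
\[
\int_\Omega|\phi|^p w\,dx\ \le\ C_n^{\,p}\,C(p,w)\,(\mathrm{diam}\,\Omega)^p\int_\Omega|\nabla\phi|^p w\,dx ,
\]
i.e.\ the asserted inequality with $C=C_n^{\,p}C(p,w)$, a constant depending only on $n$, $p$ and the $A_p$ constant of $w$, hence independent of $\phi$.

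The one substantial input is the last step, namely the $L^p(w)$ boundedness of the maximal operator for $A_p$ weights; the two preceding steps (ray integration and the dyadic splitting of the Riesz kernel) are elementary. If one prefers to avoid quoting the maximal theorem, the truncated Riesz potential $g\mapsto\int_\Omega|x-y|^{1-n}g(y)\,dy$ can instead be estimated on $L^p(w)$ directly, through a Whitney/dyadic decomposition of $\Omega$ together with the $A_p$ inequality applied on each dyadic cube and a summation over scales; I expect the scale-matching and the summation bookkeeping in that variant to be the main nuisance, whereas the route above has essentially no obstacle beyond invoking \cite{Muc}.
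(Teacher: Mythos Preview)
Your argument is correct: the pointwise Riesz-potential bound via ray integration, the dyadic reduction to the Hardy--Littlewood maximal function with the factor $D=\mathrm{diam}\,\Omega$, and the Muckenhoupt $L^p(w)$ maximal inequality together yield the stated Poincar\'e inequality with a constant depending only on $n$, $p$ and the $A_p$ constant of $w$.

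As to comparison with the paper: the paper does not supply a proof at all---the lemma is simply quoted from \cite{Juh}. Your write-up is essentially the standard route one finds behind that citation (the fact that $A_p$ weights are $p$-admissible, hence satisfy a weighted Poincar\'e inequality, is proved in \cite{Juh} precisely through the sub-representation formula and the weighted maximal theorem). So you have filled in what the paper only references; the alternative you sketch at the end---bounding the truncated Riesz potential directly via a Whitney decomposition---would also work but, as you note, is more laborious and buys nothing extra here.
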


Using Lemma \ref{Poincare inequality}, an equivalent norm to $(\ref{norm1})$ on the space $X$ can be defined by
\begin{equation}\label{norm2}
\|u\|=\left(\int_{\Omega}|\nabla u(x)|^pw(x)dx\right)^\frac{1}{p}.
\end{equation}

\noi \textbf{A subclass of $A_p$:} Let us define a subclass of $A_p$ by
$$
A_s = \left\{w\in A_p: w^{-s}\in L^{1}(\Omega)\,\,\text{for some}\,\,s\in[\frac{1}{p-1},\infty)\cap(\frac{n}{p},\infty)\right\}.
$$
For example, $w(x)=|x|^\alpha\in A_s$ for any $-\frac{n}{s}<\alpha<\frac{n}{s}$, provided $1<p<n$.

\begin{Lemma}\label{alg-ineq}{(Algebraic Inequality, Lemma A.0.5 \cite{Peral})}
For any $x,y\in\mb{R}^n$, one has
\begin{equation*}
\langle|x|^{p-2}x-|y|^{p-2}y,x-y\rangle\geq \left\{
\begin{split}
&  c_p|x-y|^p, \text{ if } p\geq 2,\\
& c_p\frac{|x-y|^2}{(|x|+|y|)^{2-p}}, \text{ if } 1<p<2,
\end{split}\right\}
\end{equation*}
where $\langle.,.\rangle$ denotes the standard inner product in $\mb{R}^n$.
\end{Lemma}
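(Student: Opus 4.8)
The plan is to reduce the inequality to a one‑dimensional computation along the segment joining $x$ and $y$, and then estimate the resulting integral separately in the two ranges $p\ge 2$ and $1<p<2$. We may assume $x\neq y$, the statement being trivial otherwise. Set $\xi(t)=(1-t)y+tx$ for $t\in[0,1]$, so that $\xi(0)=y$, $\xi(1)=x$ and $\xi'(t)=x-y$, and put $G(t)=\langle |\xi(t)|^{p-2}\xi(t),\,x-y\rangle$. Then $G$ is continuous on $[0,1]$, of class $C^1$ wherever $\xi(t)\neq 0$, and
\[
G'(t)=|\xi(t)|^{p-2}|x-y|^2+(p-2)\,|\xi(t)|^{p-4}\langle \xi(t),x-y\rangle^2 .
\]
Since $t\mapsto\xi(t)$ is affine and injective, it vanishes for at most one $t_\ast\in[0,1]$, near which $|\xi(t)|^{p-2}$ remains integrable because $p>1$; hence the fundamental theorem of calculus gives
\[
\langle |x|^{p-2}x-|y|^{p-2}y,\,x-y\rangle = G(1)-G(0)=\int_0^1 G'(t)\,dt .
\]

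\noindent\textbf{Case $p\ge 2$.} Both summands in $G'(t)$ are nonnegative, so keeping only the first one yields
\[
\langle |x|^{p-2}x-|y|^{p-2}y,\,x-y\rangle \;\ge\; |x-y|^2\int_0^1|\xi(t)|^{p-2}\,dt .
\]
It then suffices to establish the geometric estimate $\int_0^1|\xi(t)|^{p-2}\,dt\ge c_p\,|x-y|^{p-2}$. For this, observe that $|x-y|\le|x|+|y|\le 2\max\{|x|,|y|\}$, so at least one endpoint, say $x$, obeys $|x|\ge\tfrac12|x-y|$; then for $t\in[\tfrac34,1]$ one has $|\xi(t)|\ge|x|-(1-t)|x-y|\ge\tfrac14|x-y|$, and integrating over $[\tfrac34,1]$ gives the estimate with $c_p=4^{-(p-1)}$ (the case $|y|\ge\tfrac12|x-y|$ being symmetric). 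This proves the inequality for $p\ge 2$.

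\noindent\textbf{Case $1<p<2$.} Here the second summand in $G'(t)$ is nonpositive, so we bound it from below by Cauchy--Schwarz, $\langle\xi(t),x-y\rangle^2\le|\xi(t)|^2|x-y|^2$, obtaining
\[
G'(t)\;\ge\;|\xi(t)|^{p-2}|x-y|^2+(p-2)|\xi(t)|^{p-2}|x-y|^2=(p-1)\,|\xi(t)|^{p-2}|x-y|^2 .
\]
Since $|\xi(t)|\le(1-t)|y|+t|x|\le\max\{|x|,|y|\}\le|x|+|y|$ and $p-2<0$, we get $|\xi(t)|^{p-2}\ge(|x|+|y|)^{p-2}$; integrating over $[0,1]$,
\[
\langle |x|^{p-2}x-|y|^{p-2}y,\,x-y\rangle\;\ge\;(p-1)\,|x-y|^2\int_0^1|\xi(t)|^{p-2}\,dt\;\ge\;(p-1)\,\frac{|x-y|^2}{(|x|+|y|)^{2-p}},
\]
so the inequality holds with $c_p=p-1$.

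\noindent The only genuinely delicate point is the geometric lower bound $\int_0^1|\xi(t)|^{p-2}\,dt\ge c_p|x-y|^{p-2}$ needed when $p\ge 2$ (the idea being that along the segment one endpoint has norm at least half of $|x-y|$); the hypothesis $p>1$ is used precisely to ensure integrability of $G'$ when the segment $[y,x]$ passes through the origin, and all remaining steps are direct computations.
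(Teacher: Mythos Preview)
Your argument is correct. The integral representation via $G(t)=\langle |\xi(t)|^{p-2}\xi(t),x-y\rangle$ together with the pointwise bounds on $G'(t)$ is the standard route to these inequalities, and each step checks out: the formula for $G'(t)$ is the derivative of $z\mapsto |z|^{p-2}z$ evaluated along the segment; for $p\ge 2$ you correctly discard the nonnegative second summand and then obtain the geometric lower bound $\int_0^1|\xi(t)|^{p-2}\,dt\ge 4^{1-p}|x-y|^{p-2}$ by restricting to the quarter of the interval nearest the larger endpoint; for $1<p<2$ the Cauchy--Schwarz bound on $\langle\xi,x-y\rangle^2$ gives $G'(t)\ge (p-1)|\xi(t)|^{p-2}|x-y|^2$, and the convexity estimate $|\xi(t)|\le |x|+|y|$ yields the claim with $c_p=p-1$. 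Your remark on integrability when the segment meets the origin is also accurate, since then $|\xi(t)|=|t-t_\ast|\,|x-y|$ and $p-2>-1$.

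As for comparison with the paper: the paper does not supply a proof of this lemma at all; it merely cites Lemma~A.0.5 of Peral's lecture notes and uses the inequality as a black box. Your proof is precisely the classical one that appears in that reference (and in many other places), so there is nothing to contrast in terms of approach.
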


\begin{Lemma}\label{embedding}(Embedding)
For any $w \in A_s$, we have the following continuous inclusion map
\[
    X\hookrightarrow W_{0}^{1,p_s}(\Omega)\hookrightarrow
\begin{cases}
    L^q(\Omega),& \text{for } p_s\leq q\leq p_s^{*}, \text{in case of } 1\leq p_s<n, \\
    L^q(\Omega),& \text{for } 1\leq q< \infty, \text{in case of } p_s=n, \\
    C(\overline{\Omega}),& \text{in case of } p_s>n,
\end{cases}
\]
where $p_s = \frac{ps}{s+1}$ and $p_s^*=\frac{np_s}{n-p_s}$ is the critical Sobolev exponent.

Moreover, the above embeddings are compact except for $q=p_s^{*}$ in case of $1\leq p_s<n$.
\end{Lemma}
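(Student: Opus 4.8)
The plan is to prove the lemma in two independent stages: first the genuinely weighted step $X\hookrightarrow W_0^{1,p_s}(\Omega)$, which is the only place the hypothesis $w\in A_s$ is used, and then to append the classical (unweighted) Sobolev embedding and the Rellich--Kondrachov theorem for $W_0^{1,p_s}(\Omega)$, which need only that $\Omega$ is bounded. Composition of a bounded linear map with a compact one being compact, this immediately yields the full chain with the stated continuity and compactness.

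For the first stage, fix $\phi\in C_c^\infty(\Omega)$ and use the equivalent norm $\|\phi\|=\left(\int_\Omega|\nabla\phi|^pw\,dx\right)^{1/p}$ from \eqref{norm2}. Since $p_s=\frac{ps}{s+1}<p$, we have $p-p_s>0$ and the pair $\frac{p}{p_s},\frac{p}{p-p_s}$ is conjugate, so writing
$$\int_\Omega|\nabla\phi|^{p_s}\,dx=\int_\Omega\bigl(|\nabla\phi|^{p_s}w^{p_s/p}\bigr)\,w^{-p_s/p}\,dx$$
and applying H\"older's inequality gives
$$\int_\Omega|\nabla\phi|^{p_s}\,dx\le\left(\int_\Omega|\nabla\phi|^pw\,dx\right)^{p_s/p}\left(\int_\Omega w^{-\frac{p_s}{p-p_s}}\,dx\right)^{\frac{p-p_s}{p}}.$$
The key algebraic identity is that $p_s=\frac{ps}{s+1}$ forces $p-p_s=\frac{p}{s+1}$, hence $\frac{p_s}{p-p_s}=s$; thus the second factor equals $\|w^{-s}\|_{L^1(\Omega)}^{(p-p_s)/p}$, which is finite by the definition of $A_s$. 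Therefore $\|\nabla\phi\|_{L^{p_s}(\Omega)}\le C\|\phi\|$ with $C=\|w^{-s}\|_{L^1(\Omega)}^{(p-p_s)/(pp_s)}$. Moreover $s\ge\frac{1}{p-1}$ is exactly the condition $p_s\ge1$, so $W_0^{1,p_s}(\Omega)$ is a bona fide Sobolev space; the same H\"older computation applied to $|\phi|^{p_s}$, combined with the Poincar\'e inequality (Lemma \ref{Poincare inequality}), likewise bounds $\|\phi\|_{L^{p_s}(\Omega)}$ by $C\|\phi\|$, so $\|\phi\|_{W^{1,p_s}(\Omega)}\le C'\|\phi\|$. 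By density of $C_c^\infty(\Omega)$ in $(X,\|\cdot\|)$ and completeness of $W_0^{1,p_s}(\Omega)$, the identity map extends to the desired continuous embedding $X\hookrightarrow W_0^{1,p_s}(\Omega)$ (one checks in the routine way, along an a.e.-convergent subsequence, that the two limits coincide).

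For the second stage, the trichotomy---$W_0^{1,p_s}(\Omega)\hookrightarrow L^q(\Omega)$ for $p_s\le q\le p_s^\ast$ when $1\le p_s<n$, for all finite $q$ when $p_s=n$, and $W_0^{1,p_s}(\Omega)\hookrightarrow C(\overline\Omega)$ when $p_s>n$---is the classical Sobolev embedding theorem on the bounded smooth domain $\Omega$, with $p_s^\ast=\frac{np_s}{n-p_s}$; the compactness for $q<p_s^\ast$ (respectively for all finite $q$, respectively into $C(\overline\Omega)$) is the Rellich--Kondrachov theorem, and the failure of compactness at the critical exponent is exactly what forces the exclusion $q=p_s^\ast$. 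Chaining this after the embedding from the first stage finishes the proof. I do not expect any serious analytic obstacle here: the one point that genuinely needs care is the exponent bookkeeping in the H\"older step---verifying that $\frac{p}{p_s}$ and $\frac{p}{p-p_s}$ are conjugate and that $\frac{p_s}{p-p_s}=s$---after which the statement follows from standard embedding theory.
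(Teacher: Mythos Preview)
Your proof is correct. The paper itself does not give an argument but simply cites Theorem~2.15 of \cite{PG}; your H\"older computation---splitting $|\nabla\phi|^{p_s}=\bigl(|\nabla\phi|^{p_s}w^{p_s/p}\bigr)w^{-p_s/p}$ and observing that $\frac{p_s}{p-p_s}=s$ so that the integrability assumption $w^{-s}\in L^1(\Omega)$ is exactly what is needed---is the standard route to the weighted embedding $X\hookrightarrow W_0^{1,p_s}(\Omega)$, and chaining with classical Sobolev/Rellich--Kondrachov is the natural way to obtain the trichotomy and compactness. In short, you have supplied the self-contained argument that the paper outsources; nothing further is required.
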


\begin{proof}
For proof refer to Theorem 2.15 of \cite{PG}.\hfill{\QED}
\end{proof}

\begin{Definition}{(Weighted Morrey space)}
Let $1<p<\infty$, $t>0$ and $w\in A_p$. Then we say that $u$ belong to the weighted Morrey space $L^{p,t}(\Omega,w)$, if $u\in L^p(\Omega,w)$, where
$$
L^p(\Om,w)=\left\{u:\Om\to\mb{R}\text{ measurable }:\int_{\Om}|u|^p\,w(x)\,dx<\infty\right\}
$$
 and
$$
\|u\|_{L^{p,t}(\Omega,w)}:=\sup_{{x\in\Omega},{{0<r<d_0}}} \left(\frac{r^t}{\mu(\Omega\cap B(x,r))}\int_{\Omega\cap B(x,r)}|u(y)|^pw(y)\,dy\right)^\frac{1}{p}<\infty,
$$
where $d_0=\text{diam}(\Omega)$ and $\mu(\Omega\cap B(x,r))=\int_{\Omega\cap B(x,r)}w(x)\,dx,$ and $B(x,r)$ denotes the ball with center $x$ and radius $r.$
\end{Definition}

\textbf{Assumption on the weight function '$w$':} Throughout the paper, we assume the following
\begin{itemize}
\item for $p_s>n$, the weight function $w\in A_s$ and
\item for $1\leq p_s\leq n$, the weight function $w\in A_s$ such that
$$\frac{1}{w}\in L^{q,pn-\alpha q(p-1)}(\Omega,w),$$
for some $q>n$ and $0<\alpha<\text{min}\{1,\frac{pn}{q(p-1)}\}$.
\end{itemize}

\begin{Lemma}\label{Uniform}
Let $u\in X$ be positive which solves the equation $-\Delta_{p,w}u=g$ for some $g\in L^\infty(\Omega)$. Then $u\geq c_K>0$ for every $K\subset\subset\Omega$.
\end{Lemma}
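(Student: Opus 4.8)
The plan is to read this as a weak-Harnack / strong minimum principle for the degenerate operator $\Delta_{p,w}$ — a positive (super)solution cannot approach $0$ in the interior — and to prove it by a local lower bound on small balls followed by a finite-covering argument. Fix $K\subset\subset\Omega$, cover it by finitely many balls $B(x_i,r_i)$, $i=1,\dots,m$, with $\overline{B(x_i,2r_i)}\subset\Omega$, and note that it suffices to find for each $i$ a constant $c_i>0$ with $u\ge c_i$ a.e. on $B(x_i,r_i)$, after which $c_K:=\min_i c_i$ does the job. In the situations where the lemma is invoked the datum $g$ is the nonnegative truncation of the singular nonlinearity, so I would assume $g\ge0$; then $u$ is a nonnegative weak supersolution, meaning $\int_\Omega w|\nabla u|^{p-2}\nabla u\cdot\nabla\varphi\,dx\ge0$ for all $\varphi\in X$ with $\varphi\ge0$.

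The core estimate is a weighted Caccioppoli inequality for the ``bad part'' of $u$. Fix one of the balls, written $B=B(x_0,2R)$ with $\overline B\subset\Omega$; for $k>0$ set $v_k=(k-u)^+\ge0$, so $\nabla v_k=-\nabla u\,\mathbf{1}_{\{u<k\}}$. For $\rho<\rho'\le2R$ choose $\eta\in C_c^\infty(B(x_0,\rho'))$ with $0\le\eta\le1$, $\eta\equiv1$ on $B(x_0,\rho)$ and $|\nabla\eta|\le C/(\rho'-\rho)$; then $v_k\eta^p$ is compactly supported in $B$, hence lies in $X$, and inserting it into the supersolution inequality, discarding the nonnegative term $\int g\,v_k\eta^p$ and absorbing the cross term by Young's inequality (legitimate since $\int_\Omega w|\nabla u|^p\,dx<\infty$), one obtains
\[
\int_{B(x_0,\rho')} w\,|\nabla(v_k\eta)|^p\,dx \;\le\; \frac{C}{(\rho'-\rho)^p}\int_{B(x_0,\rho')} w\,v_k^p\,dx .
\]
I would then couple this with the weighted Sobolev--Poincar\'e inequality for Muckenhoupt weights: there exist $\chi=\chi(n,p,w)>1$ and $C>0$ such that
\[
\Big(\frac{1}{\mu(B')}\int_{B'}|\phi|^{p\chi}\,d\mu\Big)^{\frac{1}{p\chi}} \;\le\; C\,(\text{diam}\,B')\Big(\frac{1}{\mu(B')}\int_{B'}|\nabla\phi|^{p}\,d\mu\Big)^{\frac1p},\qquad \phi\in W^{1,p}_0(B',w),
\]
for every ball $B'\subset\Omega$, with $d\mu=w\,dx$ and $\mu(B')=\int_{B'}w\,dx$ (this is a weighted analogue of Lemma \ref{Poincare inequality}, available for $A_p$ weights from the cited literature on weighted Sobolev spaces; if one wishes to use only Lemma \ref{embedding} one must additionally bring in the mutual $A_\infty$-comparability of $w\,dx$ and Lebesgue measure, which is where the structural assumption on $w$, i.e. the weighted-Morrey control of $1/w$, is used).

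With these two ingredients I would run the standard De Giorgi scheme on the decreasing levels $\delta_j=\tfrac{k}{2}(1+2^{-j})$ and radii $R_j=R(1+2^{-j})$. Setting $Y_j:=\mu\big(\{u<\delta_j\}\cap B(x_0,R_j)\big)$, feeding the Caccioppoli inequality (with $\delta_{j+1},R_{j+1},R_j$ in the roles of $k,\rho,\rho'$) into the Sobolev inequality and trading the $L^{p\chi}$-gain for a power of a level-set measure by Hölder's inequality yields a recursion
\[
Y_{j+1}\;\le\; C\,b^{\,j}\,Y_j^{\,1+\alpha},\qquad \alpha>0,\ b>1,
\]
with $C,b,\alpha$ depending only on $n$, $p$, the $A_p$-constant of $w$, and $\int_B w$, $\int_B w^{-1/(p-1)}$. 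The usual fast-convergence lemma then gives $Y_j\to0$ as soon as $Y_0=\mu\big(\{u<k\}\cap B(x_0,2R)\big)$ is below a threshold $\theta_0=\theta_0(n,p,w,R)$; and such a $k=k_B>0$ exists because $u>0$ a.e. and $w\in L^1_{\mathrm{loc}}(\Omega)$ force $\mu(\{u<k\}\cap B)\downarrow\mu(\{u=0\}\cap B)=0$ as $k\downarrow0$. Hence $\mu\big(\{u\le k_B/2\}\cap B(x_0,R)\big)=0$, i.e. $u\ge k_B/2>0$ a.e. on $B(x_0,R)$, giving the constant $c_i$ on the $i$-th ball; patching over $i$ finishes the proof. (For a $g\in L^\infty(\Omega)$ not assumed sign-definite one first absorbs the datum via $g\ge-\|g\|_{L^\infty(\Omega)}$ and the assumption on $w$ so the resulting lower-order term stays subcritical in the iteration; alternatively one invokes directly the weak Harnack inequality, or interior continuity of weak solutions of $-\Delta_{p,w}u=g$, available under the present hypotheses and used in \cite{PG}.)

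The main obstacle is precisely this iteration in the degenerate weighted setting: securing a weighted Sobolev inequality with a genuine exponent gain $\chi>1$ relative to the measure $w\,dx$, and closing the De Giorgi recursion with constants controlled purely by the $A_p$-data of $w$ (together with the Morrey bound on $1/w$ when the $L^\infty$ datum has arbitrary sign). The possible degeneracy or blow-up of $w$ is exactly what prevents quoting the unweighted De Giorgi--Nash--Moser theory and forces working with the weighted functional inequalities of Section 2; everything else — admissibility of the test function, the Caccioppoli computation, the smallness $\mu(\{u<k\}\cap B)\to0$, and the finite covering — is routine.
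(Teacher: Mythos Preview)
Your argument is sound in outline, but it is a genuinely different route from the paper's. The paper does not run any iteration at all: for $p_s>n$ it simply invokes the embedding $X\hookrightarrow C(\overline\Omega)$ from Lemma~\ref{embedding}, so that a positive continuous function attains a positive minimum on each compact $K$; for $1\le p_s\le n$ it first quotes an $L^\infty$ bound for $u$ (Theorem~3.13 of \cite{PG}) and then applies a ready-made Harnack-type result (Theorem~1.3 of \cite{Peng}) for the weighted $p$-Laplacian, which is precisely where the weighted-Morrey assumption on $1/w$ enters. In other words, the paper outsources the hard analysis to the literature, and your final parenthetical (``alternatively one invokes directly the weak Harnack inequality\ldots'') is in fact the whole of the paper's proof.

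What you gain with your approach is a self-contained argument that does not split on $p_s$ and makes transparent exactly which structural facts about $A_p$ weights are used (Caccioppoli for $(k-u)^+$, the Fabes--Kenig--Serapioni Sobolev--Poincar\'e gain $\chi>1$, and the measure continuity $\mu(\{u<k\}\cap B)\to0$). What the paper's approach gains is brevity and a clean separation of concerns: the $L^\infty$ step and the Harnack step are each delegated to a citation. Both are correct; your De~Giorgi sketch would need the sign-indefinite case handled carefully (your absorption $g\ge-\|g\|_\infty$ plus the Morrey control of $1/w$ is the right idea, but it is exactly the content of the cited Harnack theorem, so you would essentially be reproving \cite{Peng}).
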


\begin{proof}
Let $p_s>n$, then the result follows by Lemma \ref{embedding}. If $1\leq p_s\leq n$, then arguing similarly as in Theorem 3.13 of \cite{PG} we get $u\in L^\infty$. Now applying Theorem 1.3 of \cite{Peng} we get the desired result. \hfill{\QED}
\end{proof}

\begin{Definition}
We say that $u\in X$ is a weak solution of $(P_\la)$ if $u>0$ in $\Om$ and for all $\phi\in C_c^{\infty}(\Om)$, one has
\begin{equation}\label{weak-sol}
\int_{\Om}w(x)|\nabla u|^{p-2}\nabla u\cdot\nabla\phi\,dx=\int_{\Om}g_{\la}(u)\phi\,dx.
\end{equation}
\end{Definition}
Moreover we say a function $u\in X$ to be a subsolution (or supersolution) of $(P_\la)$ if 
\begin{equation}\label{subsup-sol}
\int_{\Om}w(x)|\nabla u|^{p-2}\nabla u\cdot\nabla\phi\,dx\leq (\text{or }\geq)\int_{\Om}g_{\la}(u)\phi\,dx
\end{equation}
for every $0\leq \phi\in C_c^\infty(\Om)$.

Throughout the article we denote by $X_{+}=\{u\in X:u\geq 0 \text{ a.e. in }\Om\}$, $v^{+}(x)=\text{max}\{v(x),0\},\,v^{-}(x)=\text{max}\{-v(x),0\},$ $|S|=\text{Lebesgue measure of }S,\,p'=\frac{p}{p-1}$ for $p>1.$ Then we have the following property of weak solutions.
\begin{Lemma}\label{testfn}
\eqref{weak-sol} holds for every $\phi\in X$.
\end{Lemma}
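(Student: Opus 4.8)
The plan is to upgrade the defining test-function class in \eqref{weak-sol} from $C_c^\infty(\Om)$ to all of $X = W_0^{1,p}(\Om,w)$ by a density argument, being careful that the right-hand side integral $\int_\Om g_\la(u)\phi\,dx$ makes sense for $\phi\in X$. First I would fix a weak solution $u$ and an arbitrary $\phi\in X$. By definition of $X$ there is a sequence $\phi_k\in C_c^\infty(\Om)$ with $\phi_k\to\phi$ in $X$, i.e. $\|\phi_k-\phi\|\to 0$, and passing to a subsequence we may also assume $\phi_k\to\phi$ pointwise a.e. in $\Om$. For the left-hand side of \eqref{weak-sol}, write $\int_\Om w(x)|\nabla u|^{p-2}\nabla u\cdot\nabla\phi_k\,dx$ and estimate the difference from the $\phi$-version by Hölder's inequality with exponents $p'$ and $p$ against the weight $w$: the factor $\big(\int_\Om w|\nabla u|^{(p-1)p'}\big)^{1/p'} = \big(\int_\Om w|\nabla u|^p\big)^{1/p'} = \|u\|^{p/p'}$ is finite since $u\in X$, and $\big(\int_\Om w|\nabla\phi_k-\nabla\phi|^p\big)^{1/p} = \|\phi_k-\phi\|\to 0$. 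Hence the left-hand sides converge.

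The delicate part is the right-hand side, and I expect this to be the main obstacle, because $g_\la$ is singular at $0$ so $g_\la(u)$ need not be bounded (or even locally integrable a priori without using information about $u$). I would proceed in two stages. In Case (I), $g_\la(u)=\la f(u)u^{-q}$, and by Lemma \ref{Uniform} (applied with $g=g_\la(u)$, once one knows $g_\la(u)\in L^\infty(\Om)$ on the relevant solution — in the bootstrap this is established where the solution is produced) or more directly by the construction of the solution as lying above a positive subsolution, one has $u\ge c_K>0$ on compact sets, but near $\partial\Om$ the integrability of $g_\la(u)\phi$ still requires care. The clean way is: since in the existence proof the solution satisfies $0<c_K\le u$ on $K\subset\subset\Om$ and $g_\la(u)\in L^\infty(\Om)$ (this is exactly what the construction via the purely singular problem with $-\Delta_{p,w}$ together with (f1) gives, using that $f$ is nondecreasing with the stated growth), the functional $\phi\mapsto\int_\Om g_\la(u)\phi\,dx$ is continuous on $L^1(\Om)$ and hence on $X$ via the embedding $X\hookrightarrow L^{p_s}(\Om)\hookrightarrow L^1(\Om)$ from Lemma \ref{embedding}. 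Therefore $\int_\Om g_\la(u)\phi_k\,dx\to\int_\Om g_\la(u)\phi\,dx$. Combining with the previous paragraph, \eqref{weak-sol} passes to the limit and holds for $\phi$.

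For the approximated problems $(P_{\la,\e})$ in Case (II) the same scheme applies verbatim with $g_\la$ replaced by the truncated nonlinearity $g_{\la,\e}(u)=\la(u+\e)^{-q}+u^r$, which is bounded above on bounded sets; here one uses $u\in L^\infty$ (Lemma \ref{Uniform} / the argument in Theorem 3.13 of \cite{PG}) to bound $u^r$ and the truncation to bound $\la(u+\e)^{-q}\le\la\e^{-q}$, so again $g_{\la,\e}(u)\in L^\infty(\Om)$ and the $L^1$-continuity argument closes. If one wants to avoid invoking $L^\infty$ boundedness, the alternative is to note that $g_\la(u)\phi\ge 0$ for $\phi\ge 0$, split a general $\phi=\phi^+-\phi^-$, and apply Fatou's lemma together with the a priori bound obtained by testing the equation itself with $u$; but the $L^\infty$ route is shorter and is already available from the cited results. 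Finally, I would remark that once \eqref{weak-sol} holds for all $\phi\in X$, one may in particular use truncations $\min\{u,k\}$, $(u-k)^+$, and powers of $u$ as test functions, which is what the later sections need. \hfill{\QED}
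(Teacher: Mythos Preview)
Your argument for the left-hand side is fine, but the treatment of the right-hand side has a real gap. You assert that $g_\la(u)\in L^\infty(\Om)$, but this is false: even for the solutions constructed in the paper, $u$ vanishes on $\partial\Om$, so $u^{-q}$ (and hence $g_\la(u)$ in both Case~(I) and Case~(II)) blows up near the boundary. The information $u\ge c_K>0$ is available only on compact $K\subset\subset\Om$, which gives $g_\la(u)\in L^\infty_{\mathrm{loc}}(\Om)$ at best. Consequently the map $\phi\mapsto\int_\Om g_\la(u)\phi\,dx$ is \emph{not} a priori continuous on $L^1(\Om)$ or on $X$, and your passage to the limit along an arbitrary $C_c^\infty$ approximating sequence is unjustified. (The $L^\infty$ route does work for the $\e$-approximated problems $(P_{\la,\e})$, but the lemma is actually invoked in the paper for the limiting singular problem $(P_\la)$.)

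The paper's proof avoids this by using a \emph{monotone} approximation: for $v\in X_+$ one constructs (following Lemma~A.1 of \cite{hirano1}) a sequence $\{v_n\}\subset X$ of compactly supported functions with $0\le v_1\le v_2\le\cdots$ and $v_n\to v$ strongly in $X$. Since each $v_n$ has compact support and $g_\la(u)\in L^\infty_{\mathrm{loc}}$, the identity \eqref{weak-sol} holds with $\phi=v_n$; the left-hand side converges by your H\"older argument, and on the right-hand side $g_\la(u)v_n\nearrow g_\la(u)v$ pointwise with $g_\la(u)\ge 0$, so monotone convergence gives $\int_\Om g_\la(u)v_n\,dx\to\int_\Om g_\la(u)v\,dx$ with the limit finite (equal to the LHS limit). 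One then handles general $\phi\in X$ by writing $\phi=\phi^+-\phi^-$. Your suggested alternative via Fatou and ``testing with $u$'' is both weaker (Fatou gives an inequality, not equality) and circular (testing with $u\in X$ is precisely what is to be proved). The monotone approximation is the missing idea.
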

\begin{proof}
Following the proof of Lemma A.1 of \cite{hirano1}, we get for any $v\in X_{+}$, there exists a sequence $\{v_n\}\in X$ such that each $v_n$ has a compact support in $\Omega$, $0\leq v_1\leq v_2\leq\ldots$ and $\{v_n\}$ converges strongly to $v$ in $X$. Now arguing similarly as in Lemma 9 of \cite{hirano1} we get the result.
\end{proof} \hfill{\QED}

Our main results related to problem $(P_\la)$ reads as:
\begin{Theorem}\label{MT1}
There exists a weak solution to $(P_\la)$ for every $\la>0$ under the assumption $(f1)$ in Case (I).
\end{Theorem}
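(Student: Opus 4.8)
The plan is to carry out the sub-- and supersolution (``Perron'') scheme announced in the introduction in a variational guise, through a truncated energy functional. The first ingredient is the purely singular problem: by \cite{PG} together with Lemma~\ref{Uniform} there is $\underline v\in X\cap L^\infty(\Om)$ with $\underline v>0$ in $\Om$ solving $-\Delta_{p,w}\underline v=\underline v^{-q}$ weakly and with $\underline v\ge c_K>0$ on every $K\subset\subset\Om$. Testing this equation with $|\phi|$ (legitimate for $\phi\in X$ by the argument of Lemma~\ref{testfn}) and using the weighted H\"older inequality gives the crucial estimate $\int_\Om \underline v^{-q}|\phi|\,dx\le\|\underline v\|^{p-1}\|\phi\|$ for all $\phi\in X$; in particular, multiplication by $\underline v^{-q}$ induces a bounded (positive) functional on $X$.

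Fix $\la>0$. Since $f(0)>0$ and $f$ is nondecreasing, choosing $t_\la>0$ with $t_\la^{\,p-1+q}\le\la f(0)$ makes $\underline u_\la:=t_\la\underline v$ a subsolution of $(P_\la)$, because $-\Delta_{p,w}\underline u_\la=t_\la^{\,p-1+q}\underline u_\la^{-q}\le\la f(0)\underline u_\la^{-q}\le g_\la(\underline u_\la)$. Since $\lim_{t\to\infty}f(t)/t^{q+p-1}=0$, choosing $K_\la\ge t_\la$ large enough that $K_\la^{\,p-1+q}\ge\la f(K_\la\|\underline v\|_\infty)$ makes $\bar u_\la:=K_\la\underline v$ a supersolution, because $-\Delta_{p,w}\bar u_\la=K_\la^{\,p-1+q}\bar u_\la^{-q}\ge\la f(K_\la\|\underline v\|_\infty)\bar u_\la^{-q}\ge g_\la(\bar u_\la)$. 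Thus $0<\underline u_\la\le\bar u_\la$, $\bar u_\la\in L^\infty(\Om)$, and all test-function integrals below are finite by the estimate of the first step (whose weak forms, as in Lemma~\ref{testfn}, extend to test functions in $X$).

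Next I would freeze the nonlinearity outside the order interval: set $\hat g(x,t):=g_\la\big(\mathrm{med}\{\underline u_\la(x),t,\bar u_\la(x)\}\big)$, a Carath\'eodory function with $0\le\hat g(x,t)\le h(x):=\la f(\|\bar u_\la\|_\infty)\,\underline u_\la(x)^{-q}$, and put $\hat G(x,t)=\int_0^t\hat g(x,s)\,ds$, so $|\hat G(x,t)|\le h(x)|t|$; by the first step $h$ acts as a bounded functional on $X$. Consider $\Phi(u)=\frac{1}{p}\|u\|^p-\int_\Om\hat G(x,u)\,dx$ on $X$. Then $\Phi$ is bounded below and coercive, since $\Phi(u)\ge\frac{1}{p}\|u\|^p-\|h\|_{X^*}\|u\|$, and weakly lower semicontinuous: if $u_n\rightharpoonup u$ in $X$ then $u_n\to u$ in $L^p(\Om)$ by the compact embedding of Lemma~\ref{embedding}, so $|u_n-u|$ is bounded in $X$ and tends to $0$ in $L^p(\Om)$, hence converges weakly to $0$ in $X$, whence $\big|\int_\Om\hat G(x,u_n)\,dx-\int_\Om\hat G(x,u)\,dx\big|\le\int_\Om h\,|u_n-u|\,dx\to0$. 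Therefore $\Phi$ attains its infimum at some $u^*\in X$; and since $|\hat g(x,t)|\le h(x)$ with $\int_\Om h|\phi|\,dx<\infty$ for each $\phi\in X$, a dominated-convergence computation of the directional derivatives of $\Phi$ at $u^*$ shows that $\int_\Om w|\nabla u^*|^{p-2}\nabla u^*\cdot\nabla\phi\,dx=\int_\Om\hat g(x,u^*)\phi\,dx$ for all $\phi\in X$.

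Finally I would show $\underline u_\la\le u^*\le\bar u_\la$ a.e., so that the truncation is inactive. Testing the identity for $u^*$ and the subsolution inequality for $\underline u_\la$ with $(\underline u_\la-u^*)^+\in X$, and noting that $\hat g(x,u^*)=g_\la(\underline u_\la)$ on $\{u^*<\underline u_\la\}$, the right-hand sides cancel while the left-hand side is nonnegative by Lemma~\ref{alg-ineq}; the strict monotonicity there forces $\nabla(\underline u_\la-u^*)^+=0$ a.e., hence $(\underline u_\la-u^*)^+\equiv0$ by the Poincar\'e inequality (Lemma~\ref{Poincare inequality}). Symmetrically, testing with $(u^*-\bar u_\la)^+$ and the supersolution inequality gives $u^*\le\bar u_\la$. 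Consequently $\mathrm{med}\{\underline u_\la,u^*,\bar u_\la\}=u^*$ a.e., so $\hat g(\cdot,u^*)=g_\la(u^*)$ and $u^*$ is a weak solution of $(P_\la)$, positive since $u^*\ge\underline u_\la>0$ and bounded since $u^*\le\bar u_\la\in L^\infty(\Om)$. The step I expect to be most delicate is the analytic treatment of the singular term --- ensuring that $\underline v^{-q}$, hence $h$, genuinely induces a bounded functional on $X$, so that $\Phi$ is coercive and weakly l.s.c.\ and the Euler--Lagrange identity is valid despite the singularity --- which is precisely where the estimate inherited from \cite{PG}, Lemma~\ref{testfn}, and the locally uniform lower bound of Lemma~\ref{Uniform} are indispensable; the comparison step is routine but relies on the strict monotonicity in Lemma~\ref{alg-ineq} because $\Delta_{p,w}$ is only quasilinear.
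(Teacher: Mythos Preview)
Your argument is correct and complete; the comparison at the end and the coercivity/weak l.s.c.\ step via the dual estimate $\int_\Om \underline v^{-q}|\phi|\le\|\underline v\|^{p-1}\|\phi\|$ are sound. However, your route differs from the paper's in two respects worth noting. First, the paper builds the subsolution from the first eigenfunction $e_1$ of $-\Delta_{p,w}$ (exploiting $\lim_{t\to0}f(t)/t^{q}=\infty$), while you take both sub- and supersolution as scalings of the same singular-problem solution $\underline v$; your choice is more economical and uses only $f(0)>0$ together with the nondecreasing property. Second, and more significantly, the paper does \emph{not} truncate: it minimizes the untruncated energy $E_\la$ directly over the closed convex order interval $M=\{\underline u\le v\le\overline u\}$ and then runs a delicate calculation (Lemma~\ref{Subsuplemma}) with the perturbation $\eta_\e=\mathrm{med}\{\underline u,u+\e\phi,\overline u\}$, splitting the error into two pieces $Q^\e$, $Q_\e$ and estimating each using the sub/supersolution inequalities and Lemma~\ref{alg-ineq}. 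Your truncation, by contrast, converts the problem into a globally defined, coercive, $C^1$ functional on all of $X$, so the Euler--Lagrange identity is immediate by dominated convergence and the only remaining work is the comparison $\underline u_\la\le u^*\le\bar u_\la$, which is a one-line monotonicity argument. The price you pay is that you must first establish that $h=\la f(\|\bar u_\la\|_\infty)\underline u_\la^{-q}$ lies in $X^*$---precisely your first-paragraph estimate---whereas the paper never needs this global dual bound because it works inside $M$ and uses instead the local lower bound $\underline u\ge c_K$ on $\mathrm{supp}\,\phi$. Both approaches are standard; yours is arguably cleaner here since it sidesteps the $Q^\e$, $Q_\e$ computation entirely.
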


\begin{Theorem}\label{MT3}
There exists a $\Lambda>0$ such that when $\la \in (0, \Lambda)$, $(P_{\la})$ admits at least two weak solutions in Case (II).
\end{Theorem}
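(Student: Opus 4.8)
The plan is to carry out the approximation scheme announced in the introduction. For $\e>0$ I would study the regularized problem
\[
(P_{\la,\e})\colon\quad -\Delta_{p,w}u=\la(u^{+}+\e)^{-q}+(u^{+})^{r}\ \text{ in }\ \Om,\qquad u=0\ \text{ on }\ \partial\Om,
\]
whose energy functional
\[
I_{\la,\e}(u)=\frac{1}{p}\|u\|^{p}-\frac{\la}{1-q}\int_{\Om}\big((u^{+}+\e)^{1-q}-\e^{1-q}\big)\,dx-\frac{1}{r+1}\int_{\Om}(u^{+})^{r+1}\,dx
\]
is well defined and $C^{1}$ on $X$: the truncated singular term is globally Lipschitz in $u$ (its $u$-derivative is bounded by $\e^{-q}$) and $r+1<p^{*}_{s}$ gives, by Lemma \ref{embedding}, the compact embedding $X\hookrightarrow L^{r+1}(\Om)$. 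Using $(u^{+}+\e)^{1-q}-\e^{1-q}\le(u^{+})^{1-q}$ together with that embedding, on the sphere $\|u\|=\rho$ one has $I_{\la,\e}(u)\ge\frac{1}{p}\rho^{p}-\frac{C_{1}\la}{1-q}\rho^{1-q}-\frac{C_{2}}{r+1}\rho^{r+1}$ with $C_{1},C_{2}$ independent of $\e$; since $1-q<p<r+1$, the right-hand side is positive for a suitable $\rho$ exactly when $\la$ lies below a threshold $\Lambda$, which defines the range in the statement. On the other hand $I_{\la,\e}(t\varphi)<0$ for small $t>0$ (fix $0<\varphi\in X$; the singular term is linear and negative near $t=0$) and $I_{\la,\e}(t\varphi)\to-\infty$ as $t\to\infty$ since $r+1>p$. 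Hence for $\la<\Lambda$, $I_{\la,\e}$ possesses a local minimum in $B_{\rho}$ at negative level and a mountain pass at level $c_{\e}\ge\beta>0$, with $\beta,\Lambda$ independent of $\e$.

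I would then extract two critical points of $I_{\la,\e}$. Direct minimization over $\overline{B_{\rho}}$ — using weak lower semicontinuity of $\|\cdot\|^{p}$ and weak continuity of the lower-order terms through the compact embeddings of Lemma \ref{embedding} — produces a minimizer $\zeta_{\e}$, which lies in the interior of $B_{\rho}$ because $I_{\la,\e}\ge\beta>0$ on $\partial B_{\rho}$ while $\inf_{\overline{B_{\rho}}}I_{\la,\e}<0$; thus $\zeta_{\e}$ is a critical point with $I_{\la,\e}(\zeta_{\e})\le-\delta_{0}<0$ for some $\delta_{0}$ independent of $\e$. The Mountain Pass Lemma gives a second critical point $\nu_{\e}$ at level $c_{\e}\ge\beta$; the $(PS)_{c_{\e}}$ condition holds because the $(r+1)$-growth is subcritical (compact embedding), the singular term is bounded on bounded sets for fixed $\e$, and the monotonicity inequality of Lemma \ref{alg-ineq} upgrades weak limits of $(PS)$ sequences to strong ones. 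Testing the equations with negative parts shows $\zeta_{\e},\nu_{\e}\ge0$; since each is a supersolution of an approximated purely singular problem, comparison together with the results of \cite{PG} and Lemma \ref{Uniform} produces a positive constant $c_{K}$, independent of $\e$, with $\zeta_{\e},\nu_{\e}\ge c_{K}$ on every $K\subset\subset\Om$.

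Next comes the passage to the limit $\e\to0$. Combining $I_{\la,\e}(\zeta_{\e})<0$ with $\langle I_{\la,\e}'(\zeta_{\e}),\zeta_{\e}\rangle=0$, the analogous relations for $\nu_{\e}$, and the uniform bound $c_{\e}\le C$, a routine manipulation yields $\|\zeta_{\e}\|+\|\nu_{\e}\|\le C$ uniformly in $\e$. Hence, along a subsequence, $\zeta_{\e}\rightharpoonup\zeta$ and $\nu_{\e}\rightharpoonup\nu$ in $X$, strongly in $L^{r+1}(\Om)$ and a.e.\ in $\Om$. For $\phi\in C_{c}^{\infty}(\Om)$ with $K=\operatorname{supp}\phi$, the uniform lower bound gives $0\le(\zeta_{\e}+\e)^{-q}|\phi|\le c_{K}^{-q}|\phi|\in L^{1}(\Om)$, so dominated convergence passes the singular term to the limit; Lemma \ref{alg-ineq} again upgrades the weak convergence to strong convergence in $X$, and we conclude that $\zeta$ and $\nu$ solve $(P_{\la})$ in the sense of \eqref{weak-sol}, the test space being extended to all of $X$ by Lemma \ref{testfn}.

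The main obstacle, and the final step, is to certify that $\zeta\neq\nu$. I would keep the energy gap quantitative: evaluate the limiting functional $I_{\la}(u)=\frac{1}{p}\|u\|^{p}-\frac{\la}{1-q}\int_{\Om}u^{1-q}\,dx-\frac{1}{r+1}\int_{\Om}u^{r+1}\,dx$, finite on $\zeta$ and $\nu$ because these are $\ge c_{K}$ on compacts, belong to $X$, and $1-q<1$ controls the singular integrand near $\partial\Om$. Using the strong $X$-convergence and Fatou/dominated-convergence arguments, the bounds $I_{\la,\e}(\zeta_{\e})\le-\delta_{0}$ and $I_{\la,\e}(\nu_{\e})\ge\beta$ pass to $I_{\la}(\zeta)\le-\delta_{0}$ and $I_{\la}(\nu)\ge\beta/2$ (say), so $I_{\la}(\zeta)<0<I_{\la}(\nu)$ and the two weak solutions are distinct. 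Finally, collecting the smallness requirements on $\la$ used both for the mountain pass geometry and for the construction of the subsolution pins down $\Lambda$ and completes the proof.
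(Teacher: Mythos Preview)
Your proposal is correct and follows essentially the same approach as the paper: approximate by $(P_{\la,\e})$, obtain a local minimizer and a mountain pass critical point of $I_{\la,\e}$ via the uniform geometry, derive the $\e$-independent lower barrier on compact subsets by comparison, pass to the limit using Lemma~\ref{embedding} and Lemma~\ref{alg-ineq}, and separate the two limits through the energy gap. The only cosmetic discrepancy is that you name the minimizer $\zeta_\e$ and the mountain pass point $\nu_\e$, whereas the paper does the opposite, and the paper obtains the lower barrier by comparing with the solution of $-\Delta_{p,w}\xi=\min\{1,\la 2^{-q}\}$ rather than with a purely singular problem.
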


\section{Existence result in Case (I)}
In this section, we head towards proving our first main result that is Theorem \ref{MT1} using the method of sub and supersolution. Let us first define our energy functional $E_\la: X \to \mb R \cup \{\pm\infty\}$ corresponding to $(P_\la)$ as
\[E_\la(u)= \frac{1}{p}\int_{\Om} w(x)|\nabla u|^p~dx - \la \int_{\Om}F(u)~dx  \]
where
\begin{equation*}
F(t)=\left\{
\begin{split}
&\int_0^t \frac{f(\tau)}{\tau^q}~d\tau,\; \text{if}\; t>0,\\
&0, \; \text{if}\; t\leq 0.
\end{split}\right.
\end{equation*}
Then the following Lemma is a crucial result to obtain the existence of solution and we follow \cite{haitoh}.

\begin{Lemma}\label{Subsuplemma}
Let $\underline{u}, \overline{u} \in X \cap  L^\infty(\Om)$ be sub and supersolution of $(P_{\la})$ respectively such that $0\leq \underline{u} \leq \overline{u}$ and $\uline{u}\geq c_K >0$ for every $K \subset \subset \Om$, for some constant $c_K$. Then there exists a weak solution $u\in X \cap L^\infty(\Om)$ of $(P_\la)$ satisfying $\uline{u}\leq u\leq \oline{u}$ in $\Om$.
\end{Lemma}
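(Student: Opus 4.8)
The plan is to run a standard monotone iteration (Perron-type) argument between the ordered pair $\uline{u}\le\oline{u}$, taking care of the singularity by working away from $\{u=0\}$ using the hypothesis $\uline u\ge c_K>0$ on compact subsets. Concretely, I would first define a truncated nonlinearity: for $v\in X$ with $\uline u\le v\le\oline u$, set $h_v(x):=g_\la\big(\min\{\max\{v(x),\uline u(x)\},\oline u(x)\}\big)$. On each compact $K\subset\subset\Om$ we have $\uline u\ge c_K$, so $\uline u\le v\le \oline u$ forces $h_v$ to be squeezed between $g_\la(\oline u)$ and $g_\la(c_K)$; since $\oline u\in L^\infty$ and $g_\la$ is continuous on $(0,\infty)$ and decreasing near $0$ in the singular variable, $h_v\in L^\infty(\Om)$ (the only possible blow-up of $g_\la$ is at $0$, which is excluded by the lower bound $\uline u\ge c_K$; globally one uses that $\uline u>0$ a.e.\ and $g_\la(\uline u)\in L^1$, which follows because $\uline u$ is a subsolution tested against itself). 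Then I would solve the auxiliary problem $-\Delta_{p,w}w_v=h_v$ in $X$; existence and uniqueness of $w_v\in X$ follow from the strict monotonicity of $-\Delta_{p,w}$ (Lemma~\ref{alg-ineq} gives strict monotonicity of the operator, and coercivity on $X$ comes from the Poincar\'e inequality, Lemma~\ref{Poincare inequality}), via the direct method applied to the strictly convex functional $\tfrac1p\int_\Om w|\nabla w_v|^p-\int_\Om h_v w_v$, which is well-defined and coercive on $X$ since $h_v\in L^\infty\subset (X)^*$ by Lemma~\ref{embedding}.

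Next I would establish the order-preserving property: if $\uline u\le v\le\oline u$ then $\uline u\le w_v\le \oline u$. For the lower bound, subtract the (weak) inequalities satisfied by $\uline u$ (subsolution) and $w_v$: testing the difference with $\phi=(\uline u-w_v)^+\in X_+$ (legitimate by Lemma~\ref{testfn}) gives
\[
\int_\Om w(x)\big(|\nabla \uline u|^{p-2}\nabla\uline u-|\nabla w_v|^{p-2}\nabla w_v\big)\cdot\nabla(\uline u-w_v)^+\,dx\le\int_\Om\big(g_\la(\uline u)-h_v\big)(\uline u-w_v)^+\,dx .
\]
On the set $\{\uline u>w_v\}$ one has (since $v\ge \uline u$ there and $g_\la$ is nonincreasing in the relevant singular regime, or more precisely because the truncation in $h_v$ pins the argument at $\uline u$ on $\{v\le\uline u\}$ and $g_\la(v)\le g_\la(\uline u)$ otherwise by monotonicity of $g_\la$ — here I would invoke that in Case (I), $f(u)u^{-q}$ need not be monotone, so instead I compare via the truncation directly: $h_v=g_\la(\uline u)$ wherever $w_v<\uline u\le v$ is not guaranteed, so I truncate $v$ below by $\uline u$ in the definition of $h_v$, making $h_v\ge g_\la(\uline u)$ on $\{\uline u>w_v\}$ automatically), hence the right side is $\le 0$, while the left side is $\ge 0$ by Lemma~\ref{alg-ineq}; therefore $(\uline u-w_v)^+\equiv 0$, i.e.\ $w_v\ge\uline u$. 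The bound $w_v\le\oline u$ is symmetric, using that $\oline u$ is a supersolution and testing with $(w_v-\oline u)^+$.

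With the map $T:v\mapsto w_v$ defined on the order interval $[\uline u,\oline u]\cap X$, mapping it into itself, I would then iterate: start from $u_0=\uline u$, set $u_{n+1}=T(u_n)$. The order-preservation gives $\uline u=u_0\le u_1\le\cdots\le \oline u$ (monotonicity of $T$ on the interval follows from the same comparison argument applied to $w_{v_1},w_{v_2}$ when $v_1\le v_2$, using that $h_{v_1}\le h_{v_2}$ pointwise). The sequence $\{u_n\}$ is bounded in $X$: test $-\Delta_{p,w}u_{n+1}=h_{u_n}$ with $u_{n+1}$, use $\|h_{u_n}\|_\infty\le C$ uniform (bounded by $\|g_\la(\oline u)\|_\infty$ on compacts and controlled globally), and apply Poincar\'e. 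Hence $u_n\rightharpoonup u$ in $X$ and, by the compact embedding of Lemma~\ref{embedding}, $u_n\to u$ in $L^{p_s}$ and a.e.; monotone convergence upgrades this to $u_n\uparrow u$ a.e., and one gets strong convergence in $X$ by a standard argument (test the equation for $u_{n+1}-u_m$, use Lemma~\ref{alg-ineq}). Passing to the limit in the weak formulation $\int_\Om w|\nabla u_{n+1}|^{p-2}\nabla u_{n+1}\cdot\nabla\phi=\int_\Om h_{u_n}\phi$ — using a.e.\ convergence of the truncated arguments and dominated convergence on the right (dominated by $\|g_\la(\oline u)\|_{L^1_{loc}}\cdot\|\phi\|_\infty$ plus the global $L^1$ control from $g_\la(\uline u)$) — yields $-\Delta_{p,w}u=g_\la(u)$ weakly, since $\uline u\le u\le \oline u$ makes the truncation transparent. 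Finally $\uline u\le u\le\oline u$ gives both $u>0$ in $\Om$ (as $u\ge\uline u\ge c_K$ on compacts) and $u\in L^\infty(\Om)$ (as $u\le\oline u$).

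The main obstacle is the handling of the singular right-hand side when passing to the limit, i.e.\ justifying that $\int_\Om g_\la(u_n)\phi\,dx\to\int_\Om g_\la(u)\phi\,dx$ for all $\phi\in C_c^\infty(\Om)$ despite $g_\la$ blowing up at $0$; this is exactly where the hypothesis $\uline u\ge c_K>0$ on every $K\subset\subset\Om$ is indispensable — it confines the singularity away from the region where test functions live and provides the uniform $L^\infty(K)$ bound $g_\la(u_n)\le g_\la(c_K)$ needed for dominated convergence. A secondary technical point is verifying a priori that $g_\la(\uline u)\in L^1(\Om)$ so that the subsolution inequality can be tested against functions in $X$ (via Lemma~\ref{testfn}) and so that the iteration is well-posed near $\partial\Om$; this follows from testing the subsolution inequality for $\uline u$ with $\uline u$ itself.
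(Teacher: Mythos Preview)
Your approach is genuinely different from the paper's and, as written, has a real gap.

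The paper does \emph{not} iterate. It minimizes the energy $E_\la$ directly on the closed convex set $M=\{v\in X:\uline u\le v\le\oline u\}$, obtains a minimizer $u$ by weak lower semicontinuity, and then shows $u$ solves $(P_\la)$ by a Struwe--type perturbation: for $\phi\in C_c^\infty(\Om)$ one sets $\eta_\e=u+\e\phi-\phi^\e+\phi_\e\in M$ with $\phi^\e=(u+\e\phi-\oline u)^+$, $\phi_\e=(u+\e\phi-\uline u)^-$, uses $0\le \langle E_\la'(u),\eta_\e-u\rangle$, and estimates the two ``boundary'' pieces $Q^\e,Q_\e$ via the super/subsolution inequalities. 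Nowhere is monotonicity of $g_\la$ used.

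Your monotone iteration, by contrast, \emph{needs} $g_\la$ to be nondecreasing, and in Case~(I) it is not. With $g_\la(t)=\la f(t)t^{-q}$, $f$ nondecreasing and $f(0)>0$, the map $t\mapsto g_\la(t)$ can be strictly decreasing (e.g.\ $f\equiv1$) or non-monotone (e.g.\ $f(t)=1+t^{a}$ for small $a>0$). Your truncation $h_v=g_\la\big(\min\{\max\{v,\uline u\},\oline u\}\big)$ is the identity on the order interval, so for $v\in[\uline u,\oline u]$ one has $h_v=g_\la(v)$ and the parenthetical claim ``$h_v\ge g_\la(\uline u)$ on $\{\uline u>w_v\}$ automatically'' is simply false: you only know $v\ge\uline u$, which gives no inequality between $g_\la(v)$ and $g_\la(\uline u)$. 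Consequently neither the invariance $T([\uline u,\oline u])\subset[\uline u,\oline u]$ nor the monotonicity $v_1\le v_2\Rightarrow h_{v_1}\le h_{v_2}$ (hence $u_0\le u_1\le\cdots$) is established. The usual remedy of shifting by $+Mt$ to force monotonicity also fails here, because $g_\la'(t)$ blows up as $t\to0^+$ while $\uline u$ is only bounded below on compact subsets, so no single $M$ works on all of $\Om$.

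A smaller issue: you test the sub/supersolution inequalities with $(\uline u-w_v)^+$ and $(w_v-\oline u)^+$, which do not have compact support. The definition of sub/supersolution in the paper is against $0\le\phi\in C_c^\infty(\Om)$, and Lemma~\ref{testfn} is stated only for \emph{solutions}; extending the inequality to $X_+$ requires knowing $g_\la(\uline u)\in L^1(\Om)$, which your final paragraph tries to get by ``testing the subsolution inequality for $\uline u$ with $\uline u$ itself''---but that is circular. The paper avoids this entirely because $\phi^\e,\phi_\e$ inherit compact support from $\phi$, so the sub/supersolution inequalities are only ever invoked on compactly supported test functions and the bound $\uline u\ge c_K$ suffices.

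If you want to salvage an iteration-free but non-variational route, the cleanest fix is to mimic the paper: minimize $E_\la$ over $M$ and run the $\eta_\e$ argument; alternatively, solve the \emph{truncated} Euler--Lagrange problem $-\Delta_{p,w}u=g_\la\big(\min\{\max\{u,\uline u\},\oline u\}\big)$ directly (a single nonlinear problem, not an iteration), and then show a posteriori that $\uline u\le u\le\oline u$ so the truncation is inactive---this last comparison \emph{does} work, because on $\{u<\uline u\}$ the truncation pins the argument at $\uline u$, giving exactly $g_\la(\uline u)$ on that set and making the RHS of the comparison vanish.
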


\begin{proof}
Consider the set
$$
M=\{v\in X:\uline{u}\leq v\leq \oline{u}\text{ in }\Om\}.
$$
By the given condition $\uline{u}\leq \oline{u}$ in $\Om$, so $M\neq \emptyset$. Also it is standard to check that $M$ is closed and convex. \\
\textbf{Claim (1):} $E_\la$ is weakly sequentially lower semicontinuous on $M$.\\
To show this, consider a sequence $\{v_k\} \subset M$ such that $v_k \rightharpoonup v$ weakly in X. Then using (f1) we have
\[F(v_k) \leq \int_0^{\oline{u}} \frac{f(\tau)}{\tau^q}~d\tau \leq \frac{f(\|\oline{u}\|_\infty)}{(1-q)}\|\oline{u}\|_\infty^{1-q}.\]
Therefore from Lebesgue Dominated Convergence theorem and weak lower semicontinuity of norms, the claim follows. So there exists a minimizer $u \in M$ of $E_\la$ that is $E_\la(u)= \inf\limits_{v\in M}E_\la(v)$.\\
\textbf{Claim (2):} $u$ is a weak solution of $(P_\la)$.\\
Let $\phi \in C_c^\infty(\Om)$ and $\e>0$ then we define
\begin{equation*}
\eta_{\e}=\left\{
\begin{split}
&\oline{u} \;\;\;\text{if} \; u+\e \phi \geq \oline{u}\\
&u+\e \phi  \;\;\;\text{if}\; \uline{u}\leq u+\e \phi \leq \oline{u}\\
&\uline{u} \;\;\;\text{if} \; u+\e \phi \leq \uline{u}.
\end{split}\right.
\end{equation*}
Observe that $\eta_{\e}=u+\e\phi-\phi^{\e}+\phi_{\e}\in M.$
For notational convenience, let us denote $\phi^{\e}= (u+\e \phi -\oline{u})^+$ and $\phi_{\e}= (u+\e \phi -\uline{u})^-$. Now from definition of $u$, we have
\begin{equation}\label{eq1}
\begin{split}
0 & \leq \lim_{t \to 0} \frac{E_\la(u+t(\eta_{\e}-u))- E_\la(u)}{t}\\
& = \lim_{t \to 0} \frac{1}{p} \frac{\displaystyle\int_{\Omega} w(x)(|\nabla u+t\nabla(\eta_\e-u)|^p-|\nabla u|^p)~dx }{t} - \la\lim_{t \to 0} \frac{\displaystyle \int_\Om (F(u+t(\eta_\e-u))- F(u)) ~dx}{t}\\
&= I_1-\la I_2 \;\text{(say)}.
\end{split}
\end{equation}
 It is easy to see that
 \[I_1 = \int_\Om w(x)|\nabla u|^{p-2}\nabla u. \nabla (\eta_\e-u)~dx.\]
   Next, we consider the quantity $I_2$ and get that
   \[I_2 = \lim_{t\to 0} \int_\Om \frac{(\eta_\e-u)f(u+\theta t (\eta_\e-u))}{(u+\theta t (\eta_\e-u))^q}~dx,\; \text{for some}\; \theta \in(0,1).\]
If $(\eta_\e-u)\geq 0$ then from Fatou's Lemma, it follows that
\[I_2 \geq \int_\Om \frac{(\eta_\e-u)f(u)}{u^q}~dx.\]
 Otherwise if $(\eta_\e-u) <0$ then since $(\eta_\e-u)\geq \e\phi$, so $\phi \leq 0$. Hence in this case
 \[\left| \frac{(\eta_\e-u)f(u+\theta t (\eta_\e-u))}{(u+\theta t (\eta_\e-u))^q} \right|\leq \frac{-(\eta_\e-u)f(||\oline{u}||_{\infty})}{\uline{u}^q}\leq \frac{-\e\phi f(||\oline{u}||_\infty)}{\uline{u}^q}\in L^1(\Om)\]
 since $\phi\in C_c^{\infty}(\Om)$ and $\uline{u}\geq c_{K}>0,$ whenever $K\subset\subset\Om$.

By Lebesgue Dominated Convergence theorem,
$$
\la I_2=\la\int_{\Om}\frac{(\eta_\e-u)f(u)}{u^q}\,dx.
$$
Using these in \eqref{eq1} we obtain
\begin{equation}\label{eq2}
\begin{split}
&0\leq  \int_\Om  w(x)|\nabla u|^{p-2}\nabla u. \nabla (\eta_\e-u)~dx - \la \int_\Om \frac{(\eta_\e-u)f(u)}{u^q}~dx\\
& \implies \frac{1}{\e}(Q^\e-Q_\e)\leq \int_\Om  w(x)|\nabla u|^{p-2}\nabla u. \nabla \phi~dx - \la \int_\Om \frac{f(u)}{u^q}\phi~dx
\end{split}
\end{equation}
where
\[Q^\e= \int_\Om  w(x)|\nabla u|^{p-2}\nabla u. \nabla \phi^\e~dx - \la \int_\Om \frac{f(u)}{u^q}\phi^\e~dx \]
\[\text{and}\;Q_\e= \int_\Om  w(x)|\nabla u|^{p-2}\nabla u. \nabla \phi_\e~dx - \la \int_\Om \frac{f(u)}{u^q}\phi_\e~dx .\]
Now we estimate $Q^\e $ and $Q_\e$ separately. So consider
\begin{align*}
\frac{1}{\e}Q^\e &\geq \frac{1}{\e} \int_\Om w(x)(|\nabla u|^{p-2}\nabla u- |\nabla \oline{u}|^{p-2}\nabla \oline{u}). \nabla \phi^\e~dx + \frac{\la}{\e}\int_\Om \frac{f(\oline{u})}{\oline{u}^q}\phi^\e~dx-\frac{\la}{\e}\int_\Om \frac{f({u})}{{u}^q}\phi^\e~dx\\
&=\frac{1}{\e}\int_{\Om^{\e}}w(x)(|\nabla u|^{p-2}\nabla u-|\nabla\oline{u}|^{p-2}\nabla\oline{u}).\nabla(u-\oline{u})\,dx\\
& \quad \quad +\int_{\Om^{\e}}w(x)(|\nabla u|^{p-2}\nabla u-|\nabla\oline{u}|^{p-2}\nabla\oline{u}).\nabla\phi\,dx
+\frac{\la}{\e}\int_{\Om}\left(\frac{{f({u})}}{\oline{u}^q}-\frac{f(u)}{u^q}\right)\phi^{\e}\,dx\\
&\geq \int_{\Om^{\e}}w(x)(|\nabla u|^{p-2}\nabla u-|\nabla\oline{u}|^{p-2}\nabla\oline{u}).\nabla\phi\,dx+\frac{\la}{\e}\int_{\Om^{\e}}{f(u)}\left(\frac{1}{\oline{u}^q}-\frac{1}{u^q}\right)(u-\oline{u})\,dx\\
&\quad \quad+\la\int_{\Om^{\e}}f(u)\left(\frac{1}{\oline{u}^q}-\frac{1}{u^q}\right)\phi\,dx\\
&\geq O(1)
\end{align*}
using Lemma \ref{alg-ineq}, $\oline{u}$ is a supersolution of $(P_{\la})$, $u\leq\oline{u}$ and $\displaystyle\int_{\Om^{\e}}\frac{f(u)}{\oline{u}^q}\phi\,dx\leq\frac{f(||\oline{u}||_{\infty})}{c_K^q}||\phi||_{\infty}<+\infty$, where $\Om^{\e}= \text{supp}\;\phi^\e$. Next we consider
\begin{align*}
\frac{1}{\e}Q_\e &\leq -\frac{1}{\e}\int_{\Om_{\e}} w(x)|\nabla u|^{p-2}\nabla u. \nabla(u+\e \phi -\uline{u})~dx+\frac{1}{\e}\int_{\Om_{\e}}w(x)|\nabla\uline{u}|^{p-2}\nabla\uline{u}.\nabla(u+\e\phi-\uline{u})\,dx\\
&\quad \quad+\frac{\la}{\e}\int_{\Om}\frac{f(\uline{u})}{\uline{u}^q}\phi_{\e}\,dx-\frac{\la}{\e}\int_{\Om}\frac{f(u)}{u^q}\phi_{\e}\,dx\\
&\leq \int_{\Om_{\e}}w(x)(|\nabla\uline{u}|^{p-2}\nabla\uline{u}-|\nabla u|^{p-2}\nabla u).\nabla\phi\,dx-\frac{\la}{\e}\int_{\Om_{\e}}f(u)\left(\frac{1}{\uline{u}^q}-\frac{1}{u^q}\right)(u-\uline{u})\,dx\\
& \quad \quad-\la\int_{\Om_{\e}}f(u)\left(\frac{1}{\uline{u}^q}-\frac{1}{u^q}\right)\phi\,dx\\
&\leq O(1)
\end{align*}
using Lemma \ref{alg-ineq}, $\uline{u}$ is a subsolution of $(P_\la)$, $u \geq \uline{u}$ and $\displaystyle \int_{\Om_{\e}}f(u)\left(\frac{1}{\uline{u}^q}-\frac{1}{u^q}\right)\phi\,dx \leq \frac{2f(\|\oline{u}\|_\infty)}{c_K^q}\|\phi\|_\infty<+\infty$
.\end{proof}
Putting these in \eqref{eq2} we obtain
\[0 \leq \int_\Om  w(x)|\nabla u|^{p-2}\nabla u. \nabla \phi~dx - \la \int_\Om \frac{f(u)}{u^q}\phi~dx,\]
but since $\phi\in C_c^\infty(\Om)$ is arbitrary, Claim (2) follows. This completes the proof. \hfill{\QED}

\subsection{Sub and Supersolutions of $(P_\la)$}
We begin this section with the construction of our pair of sub and supersolutions and gradually prove our first main result, Theorem \ref{MT1}. The idea has been earlier used in \cite{KLS}. Let $e_1\in X$ denotes the first eigenfunction of $-\Delta_{p,w}$ which solves
\[-\Delta_{p,w}e_1 = \la_1 e_1^{p-1} \; \text{in}\; \Om, \;\; e_1=0\;\text{on}\; \partial \Om.\]
Then $e_1>0$, $e_1\in L^\infty(\Om)$, refer \cite{Drabek} and moreover, $e_1 \geq c_K>0$ on every $K \subset \subset \Om$ by Lemma \ref{Uniform}. By the hypothesis $(f_1)$ since $\lim\limits_{t\to 0} \frac{f(t)}{t^q}=\infty$, one can choose $a_\la>0$ sufficiently small such that
\[\la_1 (a_\la e_1)^{p-1} \leq \la \frac{f(a_\la e_1)}{(a_\la e_1)^q}.\]
Denoting by $\uline{u}=a_{\la}e_1$ we get
\[-\Delta_{p,w}\uline{u} \leq \la \frac{f(a_\la e_1)}{(a_\la e_1)^q}=\la \frac{f(\uline{u})}{\uline{u}^q}\;\text{in}\; \Om.\]
Now let $\oline{u}:= A_\la v_0$ where $0<v_0\in X\cap L^\infty(\Om)$ uniquely solves the problem
\[-\Delta_{p,w}v_0 = v_0^{-q},\; v_0>0 \; \text{in}\; \Om,\; v_0=0\;\text{on}\; \partial\Om,\]
for details, refer \cite{PG}. By the hypothesis $(f_1)$ since $\lim\limits_{t\to \infty}\frac{f(t)}{t^{q+p-1}}=0$, we choose $A_\la>0$ sufficiently large such that
\[\frac{f({A_\la \|v_0\|_\infty)}}{(A_\la \|v_0\|_\infty)^{q+p-1}}\leq \frac{1}{\la \|v_0\|^{q+p-1}_\infty}  \]
which gives
\[-\Delta_{p,w}\oline{u} = \frac{A_\la^{p-1}}{v_0^q}  \geq \la\frac{f({A_\la \|v_0\|_\infty)}}{(A_\la v_0)^{q}} \geq \la \frac{f(\oline{u})}{\oline{u}^q}\; \text{in}\; \Om \] where we have also used the non decreasing property of $f$ follows from $(f_1)$.
Therefore $\uline{u}$ and $\oline{u}$ forms sub and supersolution of $(P_\la)$ respectively and the constants $a_\la, A_\la$ can be chosen appropriately so that $\uline{u}\leq \oline{u}$.

\textbf{Proof of Theorem \ref{MT1}:} From above construction and using Lemma \ref{Subsuplemma}, we infer that $(P_\la)$ admits a weak solution $u\in X \cap L^\infty(\Om)$ such that $u \in [\uline{u}, \oline{u}]$. This proves Theorem \ref{MT1}. \hfill{\QED}

\section{Multiplicity result in Case (II)}
This section is devoted to prove our second main result that is Theorem \ref{MT3} using the method of approximation. We follow \cite{arcoya} here. Let us denote the energy functional $I_\la: X \to \mb R\cup \{\pm \infty\}$ corresponding to the problem $(P_\la)$ for Case (II)
\[I_\la(u) = \frac{1}{p}\int_\Om w(x)|\nabla u|^p~dx -\frac{\la}{1-q}\int_\Om (u^+)^{1-q}~dx -\frac{1}{r+1}\int_\Om (u^+)^{r+1}~dx.\]
For $\e>0$, let us consider the approximated problem
\begin{equation*}
(P_{\la,\e})\left\{
\begin{split}
  -\Delta_{p,w}u &=  \frac{\la}{(u^+ +\e)^q}+ (u^+)^r\;\text{in}\; \Om,\\
 u&=0 \; \text{on}\; \partial \Om
\end{split}\right.
\end{equation*}
for which the corresponding energy functional is given by
\[I_{\la,\e}(u) = \frac{1}{p}\int_\Om w(x)|\nabla u|^p~dx -\frac{\la}{1-q}\int_\Om [(u^+ +\e)^{1-q}-\e^{1-q}]~dx -\frac{1}{r+1}\int_\Om (u^+)^{r+1}~dx.\]
It is easy to verify that $I_{\la,\e}\in C^1(X,\mb R)$, $I_{\la,\e}(0)=0$ and $I_{\la,\e}(v)\leq I_{0,\e}(v)$ for all $v\geq 0.$ We recall the definition of $e_1$ from last section and w.l.o.g. assume that $\|e_1\|_\infty=1$. Our next Lemma states that $I_{\la,\e}$ satisfies the Mountain Pass geometry.

\begin{Lemma}\label{MP-geo}
There exists $R,\;\rho>0$ and $\Lambda>0$ depending on $R$ such that whenever $\la \in (0,\La)$
$$
\inf\limits_{\|v\|\leq R}I_{\la,\e}(v)<0\;\text{and}\;
\inf\limits_{\|v\|=R}I_{\la,\e}(v)\geq \rho.
$$
Moreover there exists $T>R$ such that 
$
I_{\la,\e}(Te_{1})<-1$ for $\la\in (0,\La)$.
\end{Lemma}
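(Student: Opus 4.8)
The plan is to establish the Mountain Pass geometry for $I_{\la,\e}$ in three stages: first the negativity on a small ball, then the positive lower bound on a sphere of radius $R$, and finally the existence of a point past the sphere where the functional drops below $-1$. Throughout I would use the compact embedding $X \hookrightarrow L^{r+1}(\Om)$ (valid since $r+1 < p_s^*$, by Lemma \ref{embedding}) and the elementary inequality $\int_\Om (u^+)^{1-q}\,dx \leq C \|u\|^{1-q}$, which follows from $1-q < 1 < p_s$ and Hölder/embedding, so all three lower-order terms in $I_{\la,\e}$ are controlled by powers of $\|u\|$.

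For the negativity $\inf_{\|v\|\leq R} I_{\la,\e}(v) < 0$: I would simply test with $v = t e_1$ for small $t>0$. Since $[(te_1+\e)^{1-q}-\e^{1-q}] \geq 0$ and is strictly positive on a set of positive measure, and since $(te_1)^{r+1}\geq 0$, while the gradient term is $\frac{t^p}{p}\|e_1\|^p$, for $t$ small the subcritical-in-$t$ term $-\frac{\la}{1-q}\int_\Om[(te_1+\e)^{1-q}-\e^{1-q}]\,dx$ behaves like $-ct$ (more precisely its derivative at $t=0^+$ is $-\la\int_\Om (t e_1 + \e)^{-q} e_1 \ge -\la \e^{-q}\|e_1\|_1 < 0$ but bounded), hence dominates $\frac{t^p}{p}\|e_1\|^p = O(t^p)$ with $p>1$. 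Thus $I_{\la,\e}(te_1)<0$ for $t$ small; choosing such $t$ with $\|t e_1\| \le R$ gives the claim for any $R$ we later fix.

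For the sphere estimate: on $\|v\| = R$ write $I_{\la,\e}(v) \geq \frac{1}{p}R^p - \frac{\la}{1-q} C_1 R^{1-q} - \frac{1}{r+1} C_2 R^{r+1}$, using the embedding constants $C_1, C_2$. Since $r+1 > p$, I would first fix $R$ small enough that $\frac{1}{p}R^p - \frac{C_2}{r+1}R^{r+1} \geq \frac{1}{2p}R^p$, say. Then choose $\Lambda > 0$ (depending on this $R$) small enough that for $\la \in (0,\Lambda)$ one has $\frac{\la C_1}{1-q}R^{1-q} \leq \frac{1}{4p}R^p$; this is possible because it only requires $\la \le \frac{(1-q)R^{p-1+q}}{4pC_1}$. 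Setting $\rho := \frac{1}{4p}R^p > 0$ yields $\inf_{\|v\|=R} I_{\la,\e}(v) \geq \rho$, independently of $\e$. Note that since we must be consistent, the $R$ in the first bullet is the same $R$, so I record the small-$t$ point chosen in stage one to have norm $\le R$.

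For the final claim: test with $v = Te_1$, $T$ large. Since $\|e_1\|_\infty = 1$, we have $(Te_1)^{r+1} \geq T^{r+1} e_1^{r+1}$ and $\int_\Om e_1^{r+1} > 0$, so $-\frac{1}{r+1}\int_\Om (Te_1)^{r+1}\,dx \leq -c T^{r+1}$ with $c>0$; meanwhile the gradient term is $\frac{T^p}{p}\|e_1\|^p$ with $p < r+1$, and the middle term is $\leq 0$. Hence $I_{\la,\e}(Te_1) \leq \frac{T^p}{p}\|e_1\|^p - cT^{r+1} \to -\infty$ as $T\to\infty$, uniformly in $\e$ and in $\la \in (0,\Lambda)$; pick $T > R$ with $I_{\la,\e}(Te_1) < -1$. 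The one point requiring a little care — the main (mild) obstacle — is making all constants genuinely independent of $\e$: this works because in $I_{\la,\e}$ the term $[(u^++\e)^{1-q} - \e^{1-q}] \leq (u^+)^{1-q}$ (by concavity/subadditivity of $t\mapsto t^{1-q}$), so the middle term is bounded above by the $\e$-free quantity $\frac{\la}{1-q}\int_\Om (u^+)^{1-q}\,dx$, and is bounded below by $0$; both bounds are $\e$-free, which is exactly what the subsequent limiting argument as $\e \to 0$ will need.
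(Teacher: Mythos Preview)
Your proof is correct and follows essentially the same route as the paper: you test with $te_1$ for small $t$ to get negativity inside the ball (the paper computes the same derivative $\lim_{t\to 0}I_{\la,\e}(te_1)/t=-\la\e^{-q}\int_\Om e_1<0$), you bound the $(r+1)$-term via the embedding to fix $R$ first and then choose $\Lambda$ depending on $R$ to absorb the $(1-q)$-term, you invoke the same concavity inequality $[(v^++\e)^{1-q}-\e^{1-q}]\le (v^+)^{1-q}$ for $\e$-independence, and you use $r+1>p$ to drive $I_{\la,\e}(Te_1)\to-\infty$. The only cosmetic difference is that the paper defines $\Lambda$ as $\rho$ divided by $\sup_{\|v\|=R}\frac{1}{1-q}\int_\Om|v|^{1-q}$, whereas you bound that supremum by $C_1R^{1-q}$ directly via the embedding; this is the same thing.
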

\begin{proof}
We fix $l=|\Om|^{\frac{1}{(\frac{p_s^*}{r+1})'}}$. Then using H\"{o}lder's inequality and Lemma \ref{embedding}, we get that
\begin{equation*}\label{MP1}
\int_\Om (v^+)^{r+1}~dx \leq \left( \int_\Om |v|^{p_s^*}\right)^{\frac{r+1}{p_s^*}} |\Om|^{\frac{1}{(\frac{p_s^*}{r+1})'}}\leq Cl||v||^{r+1}
\end{equation*}
for some positive constant $C$ independent of $v$. We now observe that
$$
\lim_{t\to 0}\frac{I_{\la,\e}(te_1)}{t}=-\la\e^{-q}\int_{\Omega}e_{1}\,dx<0,
$$
which implies that it is possible to choose $k\in(0,1)$ sufficiently small and to set $||v||=R :=k(\frac{r+1}{pCl})^\frac{1}{r+1-p}$ such that $\inf\limits_{\|v\|\leq R}I_{\la,\e}(v)<0.$
Moreover, since $R<(\frac{r+1}{pCl})^\frac{1}{r+1-p}$ we obtain
\begin{align*}
I_{0,\e}(v)\geq \frac{R^p}{p}-\frac{ClR^{r+1}}{r+1}&=\frac{k^p}{p}(\frac{r+1}{pCl})^\frac{p}{r+1-p}-\frac{Clk^{r+1}}{r+1}(\frac{r+1}{pCl})^\frac{r+1}{r+1-p}\\
& = \left(\frac{r+1}{pCl}\right)^{\frac{p}{r+1-p}}\left(\frac{k^p}{p}-\frac{Clk^{r+1}}{p}\right):=
2\rho\,(\text{say})>0.
\end{align*}
We define $$\Lambda:=\frac{\rho}{\sup\limits_{\|v\|=R} \left(\frac{1}{1-q}\int_\Om |v|^{1-q}~dx \right)}$$
which is a positive constant and since $\rho,R$ depends on $k,r,p,|\Omega|,C$ so does $\Lambda$. We know that
$$
((v^{+}+\e)^{1-q}-\e^{1-q})\leq (v^+)^{1-q}
$$
which gives
$$
I_{\lambda,\e}(v)\geq \frac{\|v\|^p}{p}-\frac{1}{r+1}\int_{\Om}(v^+)^{r+1}\,dx-\frac{\la}{1-q}\int_{\Om}(v^{+})^{1-q}\,dx\\
\geq I_{0,\e}(v)-\frac{\la}{1-q}\int_{\Om}(v^{+})^{1-q}\,dx.
$$
Therefore
$$
\inf\limits_{\|v\|=R} I_{\la,\e}(v)\geq\inf\limits_{\|v\|=R}I_{0,\e}(v)-\la \sup\limits_{\|v\|=R} \left(\frac{1}{1-q}\int_\Om |v|^{1-q}~dx \right)\geq 2\rho -\la \sup\limits_{\|v\|=R} \left(\frac{1}{1-q}\int_\Om |v|^{1-q}~dx \right)\geq \rho
$$
if $\la\in(0,\Lambda).$

Lastly, it is easy to see that $I_{0,\e}(te_1) \to -\infty$ as $t\to +\infty$ which implies that we can choose $T>R$ such that $I_{0,\e}(te_1)<-1$. Hence
\[I_{\la,\e}(Te_1)\leq I_{0,\e}(Te_1)<-1\]
which completes the proof.

                    \hfill{\QED}
\end{proof}\\

\noi As a consequence of Lemma \ref{MP-geo}, we have
\[\inf\limits_{\|v\|=R}I_{\la,\e}(v) \geq \rho \max\{I_{\la,\e}(te_1), I_{\la,\e}(0)\} = 0.\]
Our next Lemma ensures that $I_{\la,\e}$ satisfies the Palais Smale  $(PS)_c$ condition.

\begin{Proposition}\label{PS-cond}
$I_{\la,\e}$ satisfies the $(PS)_c$ condition, for any $c \in \mb R$ that is if $\{u_k\}\subset X$ is a sequence satisfying
\begin{equation}\label{PS1}
I_{\la,\e}(u_k)\to c \; \text{and}\; I_{\la,\e}^\prime(u_k) \to 0
\end{equation}
as $k \to \infty$ then $\{u_k\}$ contains a strongly convergent subsequence in $X$.
\end{Proposition}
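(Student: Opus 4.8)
The plan is to prove the $(PS)_c$ condition in the standard two-step fashion: first show that any Palais--Smale sequence $\{u_k\}$ is bounded in $X$, then upgrade weak convergence to strong convergence using the monotonicity of the operator $-\Delta_{p,w}$ via Lemma \ref{alg-ineq}.

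For boundedness, I would test $I_{\la,\e}'(u_k)$ against $u_k$ itself and combine it with the energy bound $I_{\la,\e}(u_k)\to c$. Forming the combination $I_{\la,\e}(u_k)-\tfrac{1}{r+1}\langle I_{\la,\e}'(u_k),u_k\rangle$ kills the top-order term $\int_\Om (u_k^+)^{r+1}$ and leaves $\left(\tfrac1p-\tfrac1{r+1}\right)\|u_k\|^p$ plus lower-order terms involving $\int_\Om[(u_k^++\e)^{1-q}-\e^{1-q}]$ and $\int_\Om (u_k^++\e)^{-q}u_k$. Since $r>p-1$ the coefficient $\tfrac1p-\tfrac1{r+1}>0$, and the singular terms are controlled because $(u_k^++\e)^{-q}u_k\le (u_k^+)^{1-q}$ and $1-q<1<p$, so by the embedding Lemma \ref{embedding} and Poincar\'e these are $o(\|u_k\|^p)+C(1+\|u_k\|)$; this forces $\{\|u_k\|\}$ to be bounded. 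Note also that the negative-part behaves well: testing against $u_k^-$ shows $\|u_k^-\|\to 0$, so effectively $u_k$ is almost nonnegative.

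Having boundedness, pass to a subsequence with $u_k\rightharpoonup u$ weakly in $X$, and by Lemma \ref{embedding} strongly in $L^{r+1}(\Om)$ (since $r+1<p_s^*$) and in $L^{1-q}$-type spaces, as well as a.e. in $\Om$. Then I would test $I_{\la,\e}'(u_k)-I_{\la,\e}'(u)$ against $u_k-u$. The right-hand side contributions $\int_\Om\big((u_k^++\e)^{-q}-(u^++\e)^{-q}\big)(u_k-u)$ and $\int_\Om\big((u_k^+)^r-(u^+)^r\big)(u_k-u)$ both tend to $0$: the singular kernel $(t+\e)^{-q}$ is bounded by $\e^{-q}$ and continuous, so dominated convergence applies, while the subcritical term is handled by the strong $L^{r+1}$ convergence and H\"older. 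What remains is
\[
\int_\Om w(x)\big(|\nabla u_k|^{p-2}\nabla u_k-|\nabla u|^{p-2}\nabla u\big)\cdot\nabla(u_k-u)\,dx \to 0,
\]
and by Lemma \ref{alg-ineq} this quantity dominates (up to the constant $c_p$) either $\int_\Om w|\nabla(u_k-u)|^p$ when $p\ge 2$, or, after a H\"older argument with exponents $p/2$ and $p/(2-p)$ using boundedness of $\{\|u_k\|\}$, it controls $\|u_k-u\|^p$ when $1<p<2$. Either way $\|u_k-u\|\to 0$, giving strong convergence in $X$.

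The main obstacle I anticipate is the bookkeeping around the singular term in the boundedness step and in verifying the weak-to-strong passage: one must make sure the integrals $\int_\Om(u_k^++\e)^{-q}\phi$ are genuinely finite and convergent for the relevant test functions, which is where the uniform-in-$k$ bound $(u_k^++\e)^{-q}\le\e^{-q}$ is essential — this is exactly why the approximation parameter $\e>0$ was introduced, and it is what makes $I_{\la,\e}$ genuinely $C^1$ so that the Palais--Smale machinery applies cleanly. The degeneracy of the weight $w$ causes no extra trouble here because everything is phrased in the weighted space $X$ with its norm $\|\cdot\|$, and Lemma \ref{alg-ineq} is applied pointwise before integrating against $w\,dx$.
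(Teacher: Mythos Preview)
Your proposal is correct and follows essentially the same route as the paper: boundedness via the combination $I_{\la,\e}(u_k)-\tfrac{1}{r+1}\langle I_{\la,\e}'(u_k),u_k\rangle$, then showing the monotone quantity $\int_\Om w(|\nabla u_k|^{p-2}\nabla u_k-|\nabla u_0|^{p-2}\nabla u_0)\cdot\nabla(u_k-u_0)\,dx\to 0$ by peeling off the nonlinear terms with compact embeddings and dominated/Vitali convergence. The only cosmetic difference is in the final step: the paper invokes an inequality from \cite{PG} yielding $\|u_k\|\to\|u_0\|$ (hence strong convergence by uniform convexity), whereas you apply Lemma~\ref{alg-ineq} pointwise and integrate against $w\,dx$ to obtain $\|u_k-u_0\|\to 0$ directly---both are standard and equally valid.
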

\begin{proof}
Let $\{u_k\} \subset X$ satisfies \eqref{PS1} then we claim that $\{u_k\}$ must be bounded in $X$. To see this, we consider
\begin{equation}\label{PS2}
\begin{split}
I_{\la,\e}(u_k)- \frac{1}{r+1}I_{\la,\e}^\prime(u_k)u_k &= \left( \frac{1}{p}-\frac{1}{r+1}\right)\|u_k\|^p -\frac{\la}{1-q} \int_\Om [(u_k^+ +\e)^{1-q}-\e^{1-q}]~dx\\
& \quad +\frac{\la}{r+1}\int_\Om (u_k^+ +\e)^{-q}u_k~dx\\
& \geq \left( \frac{1}{p}-\frac{1}{r+1}\right)\|u_k\|^p -\frac{\la}{1-q}\int_\Om (u_k^+)^{1-q}~dx+\frac{\la}{r+1}\int_\Om (u_k^+ +\e)^{-q}u_k~dx\\
& \geq \left(\frac{1}{p}-\frac{1}{r+1}\right)\|u_k\|^p -C_1\int_\Om (u_k^+)^{1-q}~dx-C_{2}\e^{1-q}\\
& \geq C_3\|u_k\|^p -C_4 \|u_k\|^{1-q}-C_{2}\e^{1-q}
\end{split}
\end{equation}
where we have used the embedding theorems and $C_1,C_2, C_3, C_4>0$ are constants. Also from \eqref{PS1} it follows that for $k$ large enough
\begin{equation}\label{PS3}
\left| I_{\la,\e}(u_k)- \frac{1}{r+1}I_{\la,\e}^\prime(u_k)u_k\right| \leq c+o(\|u_k\|).
\end{equation}
Combining \eqref{PS2} and \eqref{PS3}, our claim follows. By reflexivity of $X$, we get that there exists a $u_0\in X$ such that up to a subsequence, $u_k \rightharpoonup u_0$ weakly in $X$ as $k \to \infty$.\\
\textbf{Claim:} $u_k \to u_0$ strongly in $X$ as $k \to \infty$.\\
By \eqref{PS1}, we already have that
\[\lim_{k\to \infty}\left(\int_\Om w(x)|\nabla u_k|^{p-2}\nabla u_k.\nabla u_0~dx - \la \int_\Om (u_k^+ +\e)^{-q}u_0~dx - \int_\Om (u_k^+)^{r} u_0~dx\right)=0\]
and
\[\lim_{k\to \infty}\left(\int_\Om w(x)|\nabla u_k|^{p-2}\nabla u_k.\nabla u_k~dx - \la \int_\Om (u_k^+ +\e)^{-q}u_k~dx - \int_\Om (u_k^+)^r u_k~dx\right)=0.\]
Now
\begin{equation}\label{PS4}
\begin{split}
&\lim\limits_{k\to\infty}\int_{\Omega}w(x)(|\nabla u_k|^{p-2}\nabla u_k-|\nabla u_0|^{p-2}\nabla u_0).\nabla(u_k-u_0)\,dx\\
&=\lim\limits_{k\to\infty} \left( \la \int_\Om (u_k^+ +\e)^{-q}u_k~dx + \int_\Om (u_k^+)^r u_k~dx - \la \int_\Om (u_k^+ +\e)^{-q}u_0~dx - \int_\Om (u_k^+)^r u_0~dx\right)\\
&\quad -\lim_{k\to \infty}\left(\int_\Om w(x)|\nabla u_0|^{p-2}\nabla u_0. \nabla u_k~dx - \int_\Om w(x) |\nabla u_0|^p~dx\right).
\end{split}
\end{equation}
From weak convergence of $\{u_k\}$ we get
\begin{equation}\label{PS5}
\lim_{k\to \infty}\left(\int_\Om w(x)|\nabla u_0|^{p-2}\nabla u_0. \nabla u_k~dx - \int_\Om w(x) |\nabla u_0|^p~dx\right)=0.
\end{equation}
Also $|(u_k^++\e)^{-q}u_0| \leq\e^{-q}u_0$ and Lebesgue Dominated convergence theorem gives that
\begin{equation}\label{PS6}
\lim_{k \to \infty} \int_\Om (u_k^+ +\e)^{-q}u_0~dx = \int_\Om (u_0^+ +\e)^{-q}u_0~dx.
\end{equation}
 Since $u_k \to u_0$ a.e. in $\Om$ and for any measurable subset $E$ of $\Om$ we have
\[\int_E |(u_k^++\e)^{-q}u_k |~dx \leq \int_E\e^{-q}u_k~dx \leq C_1\|u_k\|_{L^{p_s^*}(\Om)}|E|^{\frac{p_s^*-1}{p_s^*}}\leq C_2|E|^{\frac{p_s^*-1}{p_s^*}}, \]
so from Vitali convergence theorem it follows that
\begin{equation}\label{PS7}
\lim\limits_{k\to\infty} \la \int_\Om (u_k^+ +\e)^{-q}u_k~dx = \la \int_\Om (u_0^+ +\e)^{-q}u_0~dx .
\end{equation}
Similarly, we have
\[\int_E |(u_k^+)^ru_0|~dx \leq \|u_0\|_{L^{p_s^*}(\Om)} \left(\int_E (u_k^+)^{rp_s^{*'}}~dx\right)^{\frac{1}{p^{*'}_s}}\leq C_3 |E|^{\alpha}  \]
and
\[\int_E |(u_k^+)^ru_k|~dx \leq \|u_k\|_{L^{p_s^*}(\Om)} \left(\int_E (u_k^+)^{rp_s^{*'}}~dx\right)^{\frac{1}{p^{*'}_s}}\leq C_3 |E|^{\beta}  \]
for some constants $\alpha>0,\beta>0$ which using Vitali convergence theorem  implies that
\begin{equation}\label{PS8}
\lim_{k \to \infty} \int_\Om (u_k^+)^ru_0~dx  =\int_\Om (u_0^+)^ru_0~dx \text{ and } \lim_{k \to \infty} \int_\Om (u_k^+)^ru_k~dx  =\int_\Om (u_0^+)^ru_0~dx.
\end{equation}
Putting \eqref{PS5}, \eqref{PS6}, \eqref{PS7} and \eqref{PS8} in \eqref{PS4} we obtain
\[\lim\limits_{k\to\infty}\int_{\Omega}w(x)(|\nabla u_k|^{p-2}\nabla u_k-|\nabla u_0|^{p-2}\nabla u_0).\nabla(u_k-u_0)\,dx =0.\]
From \cite{PG}, we know that
\begin{align*}
&\int_{\Omega}w(x)(|\nabla u_k|^{p-2}\nabla u_k-|\nabla u_0|^{p-2}\nabla u_0).\nabla(u_k-u_0)\,dx\\
& \quad \quad \quad \geq (\|u_k\|^{p-1}-\|u_0\|^{p-1})(\|u_k\|-\|u_0\|)
\end{align*}
which proves our claim. \hfill{\QED}
\end{proof}

From Lemma \ref{MP-geo}, Proposition \ref{PS-cond} and Mountain Pass Lemma, we get that there exists a $\zeta_\e \in X$ such that $I_{\lambda,\e}^\prime(\zeta_\e)=0$ such that
$$
I_{\lambda,\e}(\zeta_{\e})=\inf_{\gamma\in\Gamma}\max_{t \in [0,1]}I_{\la,\e}(\gamma (t)) \geq \rho >0
$$
where $\Gamma = \{\gamma \in C([0,1];X): \gamma(0)=0, \gamma(1))=Te_1\}.$ Furthermore, as a consequence of Lemma \ref{MP-geo}, since $\inf\limits_{\|v\|\leq R} I_{\la,\e}(v)<0$, from weak lower semicontinuity of the functional $I_{\la,\e}$ we get that there exists $\nu_\e \not\equiv 0$ such that $\|\nu_\e\| \leq R$ and
\begin{equation}\label{limit-pass}
\inf\limits_{\|v\|\leq R} I_{\la,\e}(v) =I_{\la,\e}(\nu_\e)<0 < \rho \leq I_{\la,\e}(\zeta_\e).
\end{equation}
Thus, $\zeta_\e$ and $\nu_\e$ are two different non trivial critical points of $I_{\la,\e}$. Testing $(P_{\la,\e})$ with $\min\{\zeta_\e,0\}$ and $\min\{\nu_\e,0\}$, it is easy to verify that $\zeta_\e,\nu_\e\geq 0$ since the R.H.S. of $(P_{\la,\e})$ remains a non negative quantity.

\begin{Lemma}\label{apriori}
There exists a $\Theta>0$ (independent of $\e$) such that $\|v_\e\| \leq \Theta$ where $v_\e = \zeta_\e$ or $\nu_\e$.
\end{Lemma}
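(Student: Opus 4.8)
\textbf{Proof proposal for Lemma \ref{apriori}.}

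The plan is to establish a uniform (in $\e$) bound on the mountain-pass level $I_{\la,\e}(\zeta_\e)$ from above, since the analogous bound for $\nu_\e$ is immediate: indeed $I_{\la,\e}(\nu_\e)<0$ directly from \eqref{limit-pass}, so we only need to run the argument below for $v_\e=\zeta_\e$ and then note that the $\nu_\e$ case is strictly easier. First I would produce an $\e$-independent upper bound for $I_{\la,\e}(\zeta_\e)$. Recall $I_{\la,\e}(\zeta_\e)=\inf_{\gamma\in\Gamma}\max_{t\in[0,1]}I_{\la,\e}(\gamma(t))$, so it suffices to estimate the max along the single competitor path $\gamma(t)=tTe_1$. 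Along this path, using $((v^++\e)^{1-q}-\e^{1-q})\geq 0$ and dropping that term, one gets $I_{\la,\e}(tTe_1)\leq \frac{(tT)^p}{p}\|e_1\|^p - \frac{(tT)^{r+1}}{r+1}\int_\Om e_1^{r+1}\,dx$, whose maximum over $t\in[0,1]$ (indeed over all $t\geq 0$) is bounded by a constant $M$ depending only on $p,r,T,\|e_1\|,\|e_1\|_{L^{r+1}}$ but \emph{not} on $\e$ (nor on $\la$, once we discard the singular term). Hence $\rho\leq I_{\la,\e}(\zeta_\e)\leq M$ uniformly in $\e$.

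Next I would feed this into the Palais--Smale-type computation already carried out in \eqref{PS2}. Since $I_{\la,\e}'(\zeta_\e)=0$, the identity
\[
I_{\la,\e}(\zeta_\e)-\frac{1}{r+1}I_{\la,\e}'(\zeta_\e)\zeta_\e = I_{\la,\e}(\zeta_\e)
\]
together with the chain of inequalities in \eqref{PS2} (which used only the embedding $X\hookrightarrow L^{p_s^*}(\Om)$ and elementary estimates, all $\e$-uniform once we bound $\e$ away from, say, $\e<1$) yields
\[
C_3\|\zeta_\e\|^p - C_4\|\zeta_\e\|^{1-q} - C_2\e^{1-q} \leq I_{\la,\e}(\zeta_\e)\leq M,
\]
with $C_2,C_3,C_4>0$ independent of $\e$. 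Since $1-q<1<p$ and $\e^{1-q}\leq 1$, this is a coercive inequality in $\|\zeta_\e\|$: it forces $\|\zeta_\e\|\leq \Theta_1$ for some $\Theta_1>0$ depending on $M,C_2,C_3,C_4$ but not on $\e$. For $\nu_\e$ the same inequality holds with $I_{\la,\e}(\nu_\e)<0$ in place of $M$, giving an even smaller bound $\Theta_2$. Taking $\Theta=\max\{\Theta_1,\Theta_2\}$ completes the proof.

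The main obstacle is making sure every constant entering \eqref{PS2} — and the competitor-path estimate — is genuinely independent of $\e$; the only place $\e$ appears is through the harmless term $C_2\e^{1-q}$, which is absorbed by restricting to $\e\in(0,1)$, and through $(u_k^++\e)^{-q}u_k \geq 0$, whose sign (not magnitude) is all that is used. One should also double-check that the path $t\mapsto tTe_1$ indeed lies in $\Gamma$, i.e. that $T$ from Lemma \ref{MP-geo} can be taken the same for all $\e$; this is clear because the bound $I_{0,\e}(Te_1)<-1$ there was obtained from $I_{0,\e}(te_1)\to-\infty$, and $I_{0,\e}(te_1)=\frac{t^p}{p}\|e_1\|^p-\frac{t^{r+1}}{r+1}\int_\Om e_1^{r+1}$ is in fact independent of $\e$. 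With these checks in place the argument is routine. \hfill{\QED}
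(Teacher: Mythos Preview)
Your proposal is correct and follows essentially the same route as the paper: bound the mountain-pass level by evaluating along the fixed path $t\mapsto tTe_1$ (noting that $I_{0,\e}(tTe_1)$ is actually independent of $\e$), then combine this with the coercive inequality obtained by subtracting $\frac{1}{r+1}I_{\la,\e}'(\zeta_\e)\zeta_\e=0$ from $I_{\la,\e}(\zeta_\e)$, exactly as in \eqref{PS2}. The only minor difference is that for $\nu_\e$ the paper simply invokes $\|\nu_\e\|\leq R$ from its definition as a minimizer on $\overline{B_R}$, whereas you rerun the coercivity argument using $I_{\la,\e}(\nu_\e)<0$; both are fine, the paper's observation is just more direct.
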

\begin{proof}
The result trivially holds if $v_\e = \nu_\e$ so we deal with the case $v_\e= \zeta_\e$. Recalling the terms from Lemma \ref{MP-geo}, we define $A = \max\limits_{t \in [0,1]}I_{0,\e}(tTe_1)$ then
\[A \geq \max_{t \in [0,1]} I_{\la,\e}(tTe_1) \geq\inf_{\gamma\in\Gamma}\max_{t \in [0,1]}I_{\la,\e}(\gamma (t)) = I_{\la,\e}(\zeta_\e).\]
Therefore
\begin{equation}\label{ap1}
\frac{1}{p}\int_\Om w(x)|\nabla \zeta_{\e}|^p~dx -\frac{\la}{1-q}\int_\Om [(\zeta_{\e}+\e)^{1-q}-\e^{1-q}]~dx -\frac{1}{r+1}\int_\Om \zeta_{\e}^{r+1}~dx \leq A.
\end{equation}
Choosing $\phi=-\frac{\zeta_{\e}}{r+1}$ as a test function in $(P_{\la,\e})$ we obtain
\begin{equation}\label{ap2}
-\frac{1}{r+1}\int_{\Om}w(x)|\nabla \zeta_{\e}|^{p}\,dx+\frac{\la}{r+1}\int_{\Om}\frac{\zeta_{\e}}{(\zeta_{\e}+\e)^q}\,dx+\frac{1}{r+1}\int_{\Om}\zeta_{\e}^{r+1}\,dx=0.
\end{equation}
Adding \eqref{ap1} and \eqref{ap2} we get
\begin{align*}
\left(\frac{1}{p}-\frac{1}{r+1}\right)\int_{\Om}w(x)|\nabla \zeta_{\e}|^p\,dx
 &\leq \frac{\la}{1-q}\int_\Om [(\zeta_{\e} +\e)^{1-q}-\e^{1-q}]~dx -\frac{\la}{r+1}\int_{\Om}\frac{\zeta_{\e}}{(\zeta_{\e}+\e)^q}\,dx+A\\
 & \leq  \frac{\la}{1-q}\int_\Om [(\zeta_{\e} +\e)^{1-q}-\e^{1-q}]~dx +A\\
 & \leq  \frac{\la}{1-q}\int_\Om \zeta_{\e}^{1-q}~dx +A\leq C \|\zeta_{\e}\|^{1-q}+A,
\end{align*}
where we have used H\"older inequality along with the embedding result Lemma \ref{embedding} and $C>0$ is a constant independent of $\e$. This implies that $\{\zeta_{\e}\}$ is uniformly bounded in $X$ with respect to $\e$. This completes the proof.\hfill{\QED}
\end{proof}\\

Now as a resultant of Lemma \ref{apriori}, up to a subsequence we get that $\zeta_\e \rightharpoonup \zeta_0$ and $\nu_\e \rightharpoonup \nu_0$ weakly in $X$ as $\e \to 0^+$, for some non negative $\zeta_0,\nu_0\in X$.  In the sequel, we establish that $\zeta_0\neq \nu_0$ and forms a weak solution to our problem $(P_\la)$.
For convenience we denote by $v_0$ either $\zeta_0$ or $\nu_0$.
\begin{Lemma}\label{Solution}
$v_0\in X$ is a weak solution to the problem $(P_{\la})$.
\end{Lemma}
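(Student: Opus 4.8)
The plan is to pass to the limit $\e\to 0^+$ in the weak formulation of $(P_{\la,\e})$. Recall from Lemma \ref{apriori} that $\{v_\e\}$ is bounded in $X$, so up to a subsequence $v_\e\rightharpoonup v_0$ weakly in $X$, $v_\e\to v_0$ strongly in $L^{p_s}(\Om)$ (and in every $L^q$ with $q<p_s^*$, by Lemma \ref{embedding}), and $v_\e\to v_0$ a.e.\ in $\Om$, with all $v_\e\geq 0$ hence $v_0\geq 0$. The first task is to show $v_0>0$ in $\Om$, indeed that $v_0\geq c_K>0$ on every $K\subset\subset\Om$; this is what will let us control the singular term $(v_\e+\e)^{-q}$. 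The natural route is to compare $v_\e$ from below with the solution of the purely singular problem: since $v_\e$ satisfies $-\Delta_{p,w}v_\e = \la(v_\e+\e)^{-q}+(v_\e)^r\geq \la(v_\e+\e)^{-q}$, and using that $v_\e$ is bounded in $L^\infty$ uniformly in $\e$ (by the argument of Theorem 3.13 of \cite{PG} combined with Lemma \ref{Uniform}), one gets $-\Delta_{p,w}v_\e\geq \la(\|v_\e\|_\infty+\e)^{-q}\geq c>0$; a weak comparison principle for $-\Delta_{p,w}$ then yields $v_\e\geq \underline{w}$ where $\underline{w}$ solves $-\Delta_{p,w}\underline{w}=c$, $\underline{w}=0$ on $\partial\Om$, and Lemma \ref{Uniform} gives $\underline{w}\geq c_K>0$ on $K\subset\subset\Om$. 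Passing to the limit, $v_0\geq c_K>0$ on every $K\subset\subset\Om$.

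The second task is the strong convergence $v_\e\to v_0$ in $X$, which I would obtain exactly as in Proposition \ref{PS-cond}. Use $\phi=v_\e-v_0$ (legitimate after establishing \eqref{weak-sol} holds for $X$-test functions against the approximate problem, or by density since $v_\e-v_0\in X$) to write
\[
\int_\Om w(x)\bigl(|\nabla v_\e|^{p-2}\nabla v_\e-|\nabla v_0|^{p-2}\nabla v_0\bigr)\cdot\nabla(v_\e-v_0)\,dx = \la\!\int_\Om (v_\e+\e)^{-q}(v_\e-v_0)\,dx + \int_\Om (v_\e^+)^r(v_\e-v_0)\,dx - \int_\Om w(x)|\nabla v_0|^{p-2}\nabla v_0\cdot\nabla(v_\e-v_0)\,dx.
\]
The last term vanishes by weak convergence; the $(v_\e^+)^r(v_\e-v_0)$ term vanishes by the Vitali/H\"older estimate already used for \eqref{PS8}; and the singular term vanishes because on each $K\subset\subset\Om$ we have $0\leq (v_\e+\e)^{-q}\leq c_K^{-q}$ and $v_\e\to v_0$ in $L^1$, while on $\Om\setminus K$ the integrand is controlled in $L^1$ uniformly by $\e^{-q}$-free bounds coming from $v_\e\geq \underline{w}$... here the cleaner path is: $(v_\e+\e)^{-q}(v_\e-v_0)$ is dominated a.e.\ by $c_{K}^{-q}|v_\e-v_0|$ on $K$ and one exhausts $\Om$ by such $K$ using that $v_0>0$ a.e. Then the monotonicity inequality from \cite{PG} quoted at the end of Proposition \ref{PS-cond} forces $\|v_\e\|\to\|v_0\|$, hence $v_\e\to v_0$ strongly in $X$ and $\nabla v_\e\to \nabla v_0$ a.e.

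The third and final task is to pass to the limit in
\[
\int_\Om w(x)|\nabla v_\e|^{p-2}\nabla v_\e\cdot\nabla\phi\,dx = \la\!\int_\Om (v_\e+\e)^{-q}\phi\,dx + \int_\Om (v_\e^+)^r\phi\,dx
\]
for $\phi\in C_c^\infty(\Om)$. The left side converges to $\int_\Om w(x)|\nabla v_0|^{p-2}\nabla v_0\cdot\nabla\phi\,dx$ by strong convergence in $X$. For $\phi$ with $\operatorname{supp}\phi=K\subset\subset\Om$, the bound $(v_\e+\e)^{-q}\leq c_K^{-q}$ together with $v_\e\to v_0>0$ a.e.\ and dominated convergence gives $\int_\Om(v_\e+\e)^{-q}\phi\,dx\to\int_\Om v_0^{-q}\phi\,dx$; the term $\int_\Om (v_\e^+)^r\phi\,dx\to\int_\Om v_0^r\phi\,dx$ follows from $L^{r}$ (indeed $L^{p_s^*}$-bounded, $L^s$-convergent for $s<p_s^*$) convergence. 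Thus $v_0$ satisfies \eqref{weak-sol} with $g_\la(v_0)=\la v_0^{-q}+v_0^r$, and since $v_0>0$ in $\Om$ it is a weak solution of $(P_\la)$ in Case (II). The main obstacle is the first task — obtaining the uniform-in-$\e$ strictly positive lower bound $v_\e\geq c_K$ on compact subsets — since everything downstream (handling the singular term in both the strong convergence argument and the final limit passage) hinges on it; this is precisely where Lemma \ref{Uniform} and the purely-singular solution from \cite{PG} are used as a subsolution-type comparison.
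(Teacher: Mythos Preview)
Your overall plan is the paper's: obtain a uniform-in-$\e$ lower bound $v_\e\geq c_K>0$ on compact subsets via comparison with a fixed subsolution, then pass to the limit. The execution of the first task, however, takes a detour that risks being circular, and the paper's route is both simpler and safer.

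You derive a constant lower bound on the right-hand side from $-\Delta_{p,w}v_\e\geq\la(\|v_\e\|_\infty+\e)^{-q}$, which requires a uniform-in-$\e$ bound on $\|v_\e\|_\infty$. But the $L^\infty$ estimate you invoke (Theorem~3.13 of \cite{PG}) depends on an $L^s$ bound for the right-hand side of $(P_{\la,\e})$, and the only $\e$-free control you have on $(v_\e+\e)^{-q}$ would come from a lower bound on $v_\e$---precisely what you are trying to prove. The paper bypasses this entirely with the elementary inequality
\[
\frac{\la}{(t+\e)^q}+t^r\;\geq\;\frac{\la}{(t+1)^q}+t^r\;\geq\;\min\Bigl\{1,\tfrac{\la}{2^q}\Bigr\}=:C\qquad(t\geq 0,\ 0<\e<1),
\]
obtained by splitting into $t\geq 1$ (where $t^r\geq 1$) and $t<1$ (where $(t+1)^q<2^q$). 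This gives $-\Delta_{p,w}v_\e\geq C$ directly, with no $L^\infty$ information needed; comparison with the solution $\xi$ of $-\Delta_{p,w}\xi=C$ in $\Om$, $\xi=0$ on $\partial\Om$, together with Lemma~\ref{Uniform}, then yields $v_\e\geq\xi\geq c_K$ exactly as in your sketch.

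For the limit passage the paper simply invokes Theorem~2.20 of \cite{PG} once the $X$-bound (Lemma~\ref{apriori}) and the lower bound $v_\e\geq c_K$ are in hand. Your explicit route via strong convergence in $X$ is correct in spirit---and is in fact what the paper carries out \emph{afterwards}, in the proof of Theorem~\ref{MT3}, to distinguish $\zeta_0$ from $\nu_0$. Note, though, that there the paper gets $\|v_\e\|\to\|v_0\|$ by testing $(P_{\la,\e})$ with $v_\e$ itself (so that $0\leq(v_\e+\e)^{-q}v_\e\leq v_\e^{1-q}$ is uniformly integrable by Vitali) and $(P_\la)$ with $v_0$, then comparing. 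This cleanly avoids the boundary issue in your ``exhaust $\Om$ by compact $K$'' treatment of $\int_\Om(v_\e+\e)^{-q}(v_\e-v_0)\,dx$, which as written lacks the uniform-integrability estimate on $\Om\setminus K$ needed to make the exhaustion rigorous.
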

\begin{proof}
We observe that for any $\e\in(0,1)$ and $t\geq 0$
$$
\frac{\la}{(t+\e)^q}+t^r\geq \frac{\la}{(t+1)^q}+t^r\geq \text{min}\{1,\frac{\la}{2^q}\}.
$$
As a consequence we get
$$
-\Delta_{p,w} v_\e=\frac{\la}{(v_\e+\e)^q}+v_\e^r\geq \text{min}\{1,\frac{\la}{2^q}\}:=C,\text{say}.
$$
Consequently, if $\xi\in X$ satisfies
$$
-\Delta_{p,w}\xi =C\text{ in }\Om
$$
we get
\begin{equation}\label{strict positivity}
\int_{\Om}w(x)|\nabla v_\e|^{p-2}\nabla v_\e.\nabla \phi\,dx\geq \int_{\Om} w(x)|\nabla \xi|^{p-2}\nabla \xi.\nabla \phi\,dx
\end{equation}
for every non negative $\phi\in X$. Therefore choosing $\phi=(\xi- v_\e)^{+}\in X$ as a test function in \eqref{strict positivity} we obtain using algebraic inequality Lemma \ref{alg-ineq} that
$$
v_\e\geq \xi\text{ in }\Om.
$$
Now by the Strong maximum principle (see \cite{Juh}) we obtain $\xi>0$ in $\Om$. Now by Lemma \ref{Uniform} we obtain that $\xi\geq c_K>0$ for every $K\subset\subset\Om$. Therefore
\begin{equation}\label{uniform}
v_\e\geq c_K>0
\end{equation}
for every $K\subset\subset\Om$. Therefore using Lemma \ref{apriori} and the fact \eqref{uniform} we can apply Theorem 2.20 of \cite{PG} to pass the limit and obtain
$$
\int_{\Om}w(x)|\nabla v_0|^{p-2}\nabla v_0.\nabla \phi\,dx=\la\int_{\Om}\frac{\phi}{v_0^q}\,dx+\int_{\Om}v_0^{r}\phi\,dx.
$$
This completes the proof.
\end{proof}

\noi \textbf{Proof of Theorem \ref{MT3}:}
Using Lemma \ref{Solution} we get that $\zeta_0$ and $\nu_0$ are two positive weak solution of $(P_{\la})$. Now we are going to prove that $\zeta_0\neq \nu_0$.  Choosing $\phi=v_\e\in X$ as a test function in $(P_{\la,\e})$ we get
$$
\int_{\Omega}w(x)|\nabla v_\e|^p\,dx=\la\int_{\Om}\frac{v_\e}{(v_\e+\e)^q\,dx}+\int_{\Om}(v_\e)^{r+1}\,dx
$$
Since $r+1<p_s^{*}$, using Lemma \ref{embedding} we obtain
$$
\lim\limits_{\e\to 0}\int_{\Om}(v_\e)^{r+1}\,dx=\int_{\Om}v_0^{r+1}\,dx.
$$
Moreover, since
$$
0\leq \frac{v_\e}{(v_\e+\e)^q}\leq v_\e^{1-q},
$$
by Vitali convergence theorem
$$
\la\lim\limits_{\e\to 0}\int_{\Om}\frac{v_\e}{(v_\e+\e)^q}\,dx=\la\int_{\Om}(v_0)^{1-q}\,dx.
$$
Therefore
$$
\lim\limits_{\e\to 0}\int_{\Om}w(x)|\nabla v_\e|^{p}\,dx=\la\int_{\Om}(v_0)^{1-q}\,dx+\int_{\Om}(v_0)^{r+1}\,dx.
$$
Using Lemma \ref{testfn} we can choose $\phi=v_0$ as  a test function in $(P_\la)$ to deduce that
$$
\int_{\Om}w(x)|\nabla v_0|^{p}\,dx=\la\int_{\Om}(v_0)^{1-q}\,dx+\int_{\Om}(v_0)^{r+1}\,dx.
$$
Hence we obtain
$$
\lim\limits_{\e\to 0}\int_{\Om}w(x)|\nabla v_\e|^{p}\,dx=\int_{\Om}w(x)|\nabla v_0|^{p}\,dx
$$
and we get the strong convergence of $v_\e$ to $v_0$ in $X$.
Now by the Lebesgue dominated theorem, we get
$$
\lim\limits_{\e\to 0}\int_{\Om}[(v_\e+\e)^{1-q}-\e^{1-q}]\,dx=\int_{\Om}(v_0)^{1-q}\,dx,
$$
which together with the strong convergence of $v_\e$ implies
$
\lim\limits_{\e\to 0}I_{\la,\e}(v_\e)=I_{\la}(v_0).
$
Hence from \eqref{limit-pass} we get $\zeta_0\neq \nu_0.$ \hfill{\QED}

\section*{Acknowledgments}
We thank T.I.F.R. CAM-Bangalore for the financial support.

\bibliographystyle{plain}
\bibliography{mybibfile}

\end{document}